\title[Time Average]{Time averages of polynomials}
\author{Han Peters}
\newtheorem{theorem}{Theorem}
\newtheorem{lemma}[theorem]{Lemma}
\newtheorem{corollary}[theorem]{Corollary}
\newtheorem{conjecture}[theorem]{Conjecture}
\newtheorem{question}[theorem]{Question}
\theoremstyle{definition}
\theoremstyle{remark}
\newcommand{\NN}{\mathbb{N}}
\newcommand{\RR}{\mathbb{R}}
\newcommand{\CC}{\mathbb{C}}
\def\a{{\alpha}}
\def\e{{\epsilon}}
\begin{document}

\bibliographystyle{plain}

\begin{abstract}
We define and study when a polynomial mapping has a local or global time average. We conjecture that a polynomial $f$ in the complex plane has a time average near a point $z$ if and only if $z$ is eventually mapped into a Siegel-disc of $f$. We prove that the conjecture holds generically, namely for those polynomials whose iterates have the maximal number of critical values. Important steps in the proofs rely on understanding the iterated monodromy groups. We also show that a polynomial automorphism of $\CC^2$ has a global time average if and only if the map is conjugate to an elementary mapping. The definition of a time average is motivated by an attempt to understand the polynomial automorphism groups in dimensions $3$ and higher.
\end{abstract}

\maketitle

\section{Introduction}

Let $f : \CC^k \mapsto \CC^k$ be an entire function and let $z \in \CC^k$. We say that $f$ has a (weighted) time average near $z$ if there exists a neighborhood $U(z)$, a constant $c \in \CC$ and a non-zero sequence $a_0, a_1, \ldots$ such that the maps
\begin{align}\label{definition}
F_N =  \sum_{n=0}^{N} a_n f^n
\end{align}
converge uniformly on $U$ to the constant function $c$ as $N \rightarrow \infty$. If the maps $F_N$ converge uniformly on any compact subset of $\CC^k$ then we say that $f$ has a global time average.

The terminology is clear, we are taking an average of the maps $\mathrm{Id} = f^0, f, f^2 = f\circ f, \ldots$ with weights $a_0, a_1 \ldots$ and we require that the sums converge to the same constant $c$ independently of the point $w \in U(z)$. The definition is motivated by an attempt to understand the structure of polynomial automorphism groups in dimensions $3$ and higher, we will explain this in more detail at the end of the introduction.

In most of this article we will study time averages for polynomials in the complex plane. Theorem \ref{global} will show that a polynomial has a global time average if and only if it is affine. We conjecture the following:

\begin{conjecture}\label{main}
A polynomial $f$ of degree at least $2$ has a time average near $z \in \CC$ if and only if $z$ is eventually mapped into a Siegel-disc of $f$.
\end{conjecture}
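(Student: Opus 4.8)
The plan is to treat the two implications separately and, within each, to stratify according to the dynamical position of $z$. A convenient first reduction is the observation that a time average near $f(w)$ yields one near $w$: given weights $(b_n)$ realising a time average on a neighbourhood of $f(w)$, the shifted sums $\sum_{n} b_n f^{\,n+1}$ realise one near $w$. Thus for the ``if'' direction it suffices to build a time average at points of a Siegel disc, and for the ``only if'' direction it suffices, since for a polynomial the Fatou set decomposes (with no wandering domains) into attracting basins---including that of $\infty$---parabolic basins and Siegel discs, to rule out time averages at points of the Julia set and of the non-rotational Fatou components.

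For the construction I would linearise. If $z$ lies in a Siegel disc of period $p$ with rotation number $\t$, pass to the coordinate in which $f^{p}$ is $w \mapsto \l w$, $\l = e^{2\pi i \t}$, and write the inverse lineariser as $h(u)=\sum_{k\ge 0} h_k u^k$, so that $f^{pn}(h(w)) = h(\l^{n} w)$ and hence
\begin{equation*}
\sum_{n=0}^{N} a_n f^{pn}(h(w)) \;=\; \sum_{k\ge 0} h_k w^{k} \Big( \sum_{n=0}^{N} a_n \l^{nk} \Big).
\end{equation*}
The task becomes the arithmetic one of producing a non-zero sequence $(a_n)$ for which the partial Weyl sums $S_N^{(k)} = \sum_{n=0}^{N} a_n \l^{nk}$ converge to $\d_{k0}$ for every $k\ge 0$, with $\sup_{N}|S_N^{(k)}|$ growing slowly enough in $k$ to be absorbed by the geometric decay of $|h_k|r^{k}$ on a subdisc $\{|w|\le r\}$. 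I would build such weights from normalised Dirichlet kernels $\tfrac{1}{q_j}\sum_{n<q_j} x^{n}$ along the convergent denominators $q_j$ of $\t$, concatenated in disjoint blocks so that the partial sums telescope; the Diophantine (Brjuno) control available on a Siegel disc is exactly what bounds the $S_N^{(k)}$. Note that absolutely summable weights are impossible, since a series $\sum|a_n|<\infty$ vanishing on the dense set $\{\l^{k}\}$ would vanish identically; the convergence must be conditional, which is the delicate point here.

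For non-existence in the non-rotational Fatou components I would again linearise and read off a spectral obstruction. At an attracting cycle the same computation with multiplier $\mu$, $|\mu|<1$, forces $\sum_{n} a_n \mu^{nk}=0$ for all large $k$; but $\mu^{k}\to 0$, so either the radius of convergence of $\sum a_n x^{n}$ is positive and the identity theorem kills $(a_n)$, or it is zero and the series already diverges termwise. The basin of $\infty$ is handled through the B\"ottcher coordinate, where the $w$-dependence enters as the linearly independent monomials $\phi(w)^{d^{n}}$ and cannot cancel to a constant, and the parabolic basins through the Fatou coordinate, where the polynomial (rather than geometric) rate of approach together with the same independence of sheets precludes a constant limit. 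Each is a finite computation once the normal form is in place.

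The genuine obstacle is the Julia set. Here $\{f^{n}\}$ is not normal and, by the blow-up property, $f^{n}(U)$ eventually covers all of $\PP^{1}$ minus the finite exceptional set, so no linearising model is available and the branches of $f^{-n}$ must be controlled globally. Assuming a time average, differentiation gives $\sum_n a_n (f^{n})' \to 0$ uniformly on compacta of $U$, and comparing the values of $\sum a_n f^{n}$ along different branches of $f^{-n}$ produces, in the limit, non-trivial linear relations among the sheets of the iterated covering. The plan is to show that these relations cannot hold by feeding the combinatorics of the sheets into the iterated monodromy group: the hypothesis that the iterates realise the maximal number of critical values means that no two forward critical orbits ever collide, which forces the iterated monodromy group to be as large as possible and the sheets to be ``independent'', leaving no room for the cancellations a constant limit would demand. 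This is precisely the step I expect to resist a general argument and that confines the result to the generic polynomials, leaving Conjecture~\ref{main} open in full.
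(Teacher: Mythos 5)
Your overall architecture (a Siegel construction for the ``if'' direction; the classification of periodic Fatou components plus the escaping set for ``only if''; monodromy for the residual case; retreat to generic polynomials at the end) matches the paper's, and your shift-of-weights reduction and the observation that the weights cannot be absolutely summable are both correct. The genuine gap is in the claim that the attracting, super-attracting and parabolic cases are ``a finite computation once the normal form is in place.'' The hypothesis only gives uniform convergence of $F_N=\sum_{n\le N}a_nf^n$ on a small neighbourhood $U$ of $z$, and $z$ is an arbitrary point of the basin, so $U$ need not contain, nor be anywhere near, the fixed point. In the linearising coordinate $u=\phi(w)$ you know that the holomorphic functions $u\mapsto\sum_{k}h_k\bigl(\sum_{n\le N}a_n\mu^{nk}\bigr)u^k$ converge uniformly on the open set $\phi(U)$, which stays away from $u=0$; uniform convergence of holomorphic functions on a proper open subset does not control the Taylor coefficients at an outside point absent a global bound, so you cannot conclude that $\sum_n a_n\mu^{nk}=0$, and your spectral argument does not close. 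This is exactly the difficulty the paper's machinery is built to overcome: Lemma \ref{enlarge} propagates the convergence to larger subsets of the component (using the maximum principle once the image of $U$ wraps around the periodic point in the super-attracting case), and even then one only obtains the relation $F_{N-1}\circ(f^{-N})_i=F_{N-1}\circ(f^{-N})_j$ for the few inverse branches landing in the enlarged set; Lemma \ref{monodromy} together with Theorem \ref{post-critical} (the full tree-automorphism group, available only under the maximal-critical-value hypothesis) is then needed to spread this relation to all $d^N$ preimages and force the degree-$d^{N-1}$ polynomial $F_{N-1}$ to be constant. Your B\"ottcher argument for $I(f)$ has the same coefficient-extraction issue, though there it is repairable because $f^N(U)$ eventually contains a full level curve of the Green's function; that connectivity, plus an explicitly constructed complete cycle, is the actual content of Theorem \ref{escaping} and Lemma \ref{monodromy-escaping}.

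You have also mislocated the hard case. The Julia set is not the obstacle: for a polynomial $J(f)=\partial I(f)$, so any neighbourhood of $z\in J(f)$ contains escaping points and a time average near $z$ restricts to one near an escaping point, which the escaping-set result already forbids (Corollary \ref{julia}). The genericity hypothesis and the iterated monodromy groups are needed precisely for the bounded non-rotational Fatou components, i.e.\ the cases you set aside as routine. Finally, for the ``if'' direction your Dirichlet-kernel construction would have to confront the small divisors $|\lambda^k-1|$ and the growth of $\sup_N|S_N^{(k)}|$ in $k$; the paper's Theorem \ref{Siegel} sidesteps all of this with a soft telescoping argument that uses only the recurrence $\|f^m-f^n\|_U<\e$ on a relatively compact subset, and in particular requires no Diophantine input and covers Herman rings as well.
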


One direction we can easily show: Theorem \ref{Siegel} says that if $z$ is eventually mapped into a Siegel disc then $f$ admits a time average near $z$. We will not prove the other direction in full generality, but we will see that the conjecture holds in many cases. In Theorem \ref{escaping} we will show that $f$ has no time average near $z$ if $z$ lies in the escaping set $I(f)$, that is, if $f^n(z) \rightarrow \infty$. Then we will show (Corollary \ref{solution}) that the conjecture holds if $f^n$ has exactly $n(d-1)$ critical values for every $n \in \NN$. Note that $n(d-1)$ is the largest number possible and the condition is satisfied for a generic polynomial. The proof relies on the fact that the iterated monodromy groups of such a polynomial are as large as possible.

We will assume that the reader is familiar with basic notions in complex dynamical systems. The definitions and properties that we will use can be found in standard texts on the subject, see for example the books by Beardon \cite{beardonbook} and Milnor \cite{milnorbook}.

Before we go into the details of our results and proofs we explain how our definition of time averages is motivated by a study of polynomial automorphism groups in several variables. In one variable the only invertible polynomials are the affine functions. However, in dimensions $2$ and higher there exist many invertible polynomial mappings. For example, for any polynomial $p(z_1, \ldots , z_{n-1})$ one easily sees that the mapping

\begin{align}\label{elementary}
(z_1, \ldots z_{j-1}, z_j, z_{j+1}, \ldots z_n) \mapsto (z_1, \ldots, z_{j-1}, \alpha z_j + p, z_{j+1}, \ldots z_n),
\end{align}
is invertible for any polynomial $p(z_1, \cdots z_{j-1}, z_{j+1}, \ldots z_n)$ and constant $\a \neq 0$.

Automorphisms of the form \eqref{elementary} are called {\emph{elementary}}, and finite compositions of elementary automorphisms are called {\emph{tame}}. Jung's Theorem \cite{jung} states that all polynomial automorphisms in 2 variables are tame. In other words, the polynomial automorphism group of $\CC^2$ is generated by elementary automorphisms. This result has proved to be crucial for our understanding of the dynamics of polynomial automorphisms in $\CC^2$. Jung's result was used by Friedland and Milnor \cite{fm} to show that every polynomial automorphisms is conjugate to either a single elementary map or a composition of {\emph{H\'{e}non mappings}}:

\begin{align*}
(z, w) \mapsto (\delta w + p(z) , z),
\end{align*}
where $0 \neq \delta \in \CC$ and $p(z)$ a polynomial.

The dynamical behavior of a single elementary map can easily be understood, but the dynamics of H\'{e}non mappings is highly non-trivial and has been studied extensively (see for example the work by Hubbard-Oberste Vorth \cite{huob1}, \cite{huob2} Bedford-Smillie \cite{besm1}, \cite{besm2} \cite{besm3} and Forn{\ae}ss-Sibony \cite{fosi1} \cite{fosi2}).

In dimensions $3$ and higher there is no known generalization of Jung's theorem and the dynamics of polynomial automorphisms is far less well understood. In fact, it was recently shown by Stestakov and Umirbaev \cite{shum} that there exist polynomial automorphisms in $3$ dimensions that are not tame. An example is the Nagata automorphism:
\begin{align*}
F(x, y, z) = (x + (x^2 - yx)z, y + 2 (x^2-yz)x + (x^2 - yz)^2z, z).
\end{align*}

It might greatly increase our understanding of the dynamics of polynomial automorphisms in dimensions $3$ and higher if there was a known set of generators of the polynomial automorphism groups whose dynamics is well understood. Unfortunately such a set of generators is currently not known.

There are several conjectured generator sets. One possible candidate is the set of {\emph{locally finite}} polynomial automorphisms. A polynomial endomorphism $f$ is called locally finite if $\sup_{n \in \NN} \mathrm{deg}(f^n) < \infty$. Equivalently $f$ is locally finite if there exist $d \in \NN$ and $a_1, \ldots a_d \in \CC$ such that
\begin{align}\label{polyroots}
\a_1 f + a_2 f^2 + \ldots + a_d f^d = 0.
\end{align}

Clearly all elementary automorphisms are locally finite, and the Nagata automorphism is also locally finite. However, it may be that this class is still too small.

In \cite{mape} a generalization of locally finite maps was studied by Maubach and the author. Instead of looking at polynomial endomorphisms that are roots of polynomials as in Equation \eqref{polyroots}, one could consider the maps that are roots of power series:

\begin{align} \label{powerseries}
\sum_{n=1}^{\infty} a_n f^n = 0.
\end{align}

Of course, this definition depends on the kind of convergence. In \cite{mape} point-wise convergence of each coefficient was considered. It was shown that the polynomial automorphisms that are roots of a power series do generate the automorphisms groups in any dimension but in a rather trivial way: every polynomial automorphism is the composition of an affine map with a single root of a power series. Clearly this definition of a root of a power series is too weak to obtain a deeper understanding of the dynamical behavior of polynomial automorphisms.

Point-wise convergence of each coefficient in \eqref{powerseries} is equivalent to requiring that for $\a \in \NN^k$ one has that

\begin{align}\label{derivatives}
\sum_{n=1}^{\infty} a_n D_\a(f^n)(0) = 0.
\end{align}

From an analysis point of view it is more natural to consider a different kind of convergence, namely uniform convergence on compact subsets. In this article we require that the maps $\sum_{n=0}^{N} a_n f^n$ converge uniformly to a constant function either in a neighborhood of some point $z$ or on any compact subset of $\CC^k$. This of course implies point-wise convergence to $0$ of all the derivatives at $z$ (as in \eqref{derivatives}), so having a time average near $z=0$ implies that $f$ is the root of a power series in the sense discussed in \cite{mape}. Having a time average near $0$ is in fact strictly stronger. For example, in \cite{mape} it was shown that if $f$ is a polynomial and $0 = f(0)$ is a repelling fixed point, then $f$ is a root of a power series. But if $0$ is a repelling fixed point then it lies in the Julia set, and in Corollary \ref{julia} we will see that $f$ can not have a time average near the origin.

In the next section we prove (Theorem \ref{global}) that a polynomial of degree at least $2$ cannot have a global time average. In Section (2) we also show (Theorem \ref{Siegel}) that a polynomial does have a time average near a point that is eventually mapped into a Siegel disc. In Section (3) we discuss iterated monodromy groups and show how they can be used to prove Conjecture \ref{main}. Lemma \ref{monodromy} plays a central role in this paper. In Section (3) we also show (Theorem \ref{escaping} and Corollary \ref{julia}) that a polynomial cannot have a time average near a point in the escaping set or in the Julia set.

In Section (4) we prove our main result (Corollary \ref{solution}) which says that Conjecture \ref{main} holds for a generic set of polynomials, namely those whose iterates have the largest possible number of distinct critical values. In Section (5) we show that Conjecture \ref{main} does hold for some polynomials with a very small number of critical points. In particular we show (Theorem \ref{degreetwo}) that Conjecture \ref{main} holds for polynomials of degree $2$. In the sixth and last section we consider polynomial mappings in dimensions $3$ and higher. We show (Theorem \ref{henon}) that a polynomial automorphism of $\CC^2$ has a global time average if and only if it is polynomially conjugate to either an elementary or affine mapping.

The author would like to thank Rafe Jones for useful discussions on the subject of monodromy groups, and in particular for the proof of Theorem \ref{post-critical}

\section{Global time averages and Siegel discs}

\begin{theorem}\label{global}
Let $f$ be a polynomial of degree $d \ge 2$. Then $f$ does not admit a global time average.
\end{theorem}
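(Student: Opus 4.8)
The plan is to pass to Böttcher coordinates near the superattracting fixed point at infinity, where every iterate $f^n$ becomes a pure power map, and then to read off the coefficients $a_n$ one at a time from the (assumed constant) limit. This has the pleasant feature that it treats all cases at once, with no distinction between finitely and infinitely many non-zero $a_n$.

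First I would recall that, since $f$ has degree $d \ge 2$, the point $\infty$ is a superattracting fixed point of local degree $d$, so Böttcher's theorem supplies a conformal map $\phi$ from a neighbourhood $\{|z| > R_0\}$ of infinity onto $\{|w| > R_1\}$ with $\phi(f(z)) = \phi(z)^d$ and $\phi(z) \sim \kappa z$ as $z \to \infty$ for some $\kappa \neq 0$. Writing $\psi = \phi^{-1}$, one has the Laurent expansion $\psi(w) = \lambda w + c_0 + \sum_{j \ge 1} c_j w^{-j}$ near infinity, with $\lambda = \kappa^{-1} \neq 0$, and the conjugacy iterates to $f^n(z) = \psi(\phi(z)^{d^n})$ for every $n$ and every $|z| > R_0$ (the argument $\phi(z)^{d^n}$ remains in the domain of $\psi$ because $|\phi(z)| > R_1$). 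Then, assuming for contradiction that $f$ has a global time average, so that $\sum_{n=0}^\infty a_n f^n \equiv c$ uniformly on compact sets with $(a_n)$ not identically zero, the substitution $w = \phi(z)$ turns this into $\sum_{n=0}^\infty a_n \psi(w^{d^n}) = c$ for all $|w| > R_1$; since $\phi$ is a homeomorphism, the convergence is uniform on each circle $|w| = \rho$ with $\rho > R_1$.

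The crux is that the Laurent expansion of $\psi(w^{d^n})$ at infinity is $\lambda w^{d^n} + c_0 + \sum_{j \ge 1} c_j w^{-j d^n}$, whose only term carrying a positive power of $w$ is the single monomial $\lambda w^{d^n}$. Because $d \ge 2$, the exponents $d^n$ are pairwise distinct, so for a fixed $m \ge 0$ the coefficient of $w^{d^m}$ in $\psi(w^{d^n})$ is $\lambda$ when $n = m$ and $0$ otherwise (note $d^m \ge 1$, so $w^{d^m}$ is never the constant term). Integrating term by term against $\frac{1}{2\pi i}\oint_{|w|=\rho} (\,\cdot\,)\, w^{-(d^m+1)}\, dw$, which is legitimate by uniform convergence on $|w|=\rho$, yields $0$ on the left (the limit equals the constant $c$ and $d^m+1 \ge 2$) and $\lambda a_m$ on the right. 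Hence $a_m = 0$ for every $m$, contradicting that $(a_n)$ is non-zero, and $f$ admits no global time average.

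I expect the only delicate points to be bookkeeping rather than conceptual: verifying that $f^n(z) = \psi(\phi(z)^{d^n})$ holds on a full annular neighbourhood of infinity, and that the interchange of summation and contour integration is valid — both of which reduce to the uniform-convergence hypothesis. The single essential use of the assumption $d \ge 2$ is the distinctness of the exponents $d^n$: in Böttcher coordinates each iterate contributes exactly one positive-power monomial $w^{d^n}$, which no other iterate can cancel, and this is precisely what fails in the affine case where all iterates have degree one.
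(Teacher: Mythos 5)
Your argument is correct, but it is genuinely different from the one in the paper. You work at the superattracting fixed point at infinity: Böttcher coordinates turn each iterate into $\psi(w^{d^n})$, whose Laurent expansion at infinity carries exactly one positive-power monomial $\lambda w^{d^n}$, and since the exponents $d^n$ are pairwise distinct you can extract each $a_m$ by a contour integral against $w^{-(d^m+1)}$ (the interchange of sum and integral being justified by uniform convergence on the compact set $\psi(\{|w|=\rho\})$). The paper instead fixes $N$ with $a_{N-1}\neq 0$, picks a generic point $p$ with $d^N$ distinct preimages under $f^N$, and observes that the tail $c-\sum_{n\ge N}a_nf^n$ is constant on that fiber while the head $\sum_{n<N}a_nf^n$ is a polynomial of degree at most $d^{N-1}<d^N$, forcing it to be globally constant and hence $a_{N-1}=0$. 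Your route is cleaner in that it handles all coefficients at once with no case distinction and no genericity choice, and it foreshadows the role of the dynamics near infinity that reappears in the escaping-set argument of Theorem \ref{escaping}. What the paper's fiber-counting argument buys, and yours does not, is that it localizes: the entire strategy of the later sections is to rerun the same preimage-counting argument on a small neighborhood $U(z)$ by using monodromy to propagate the constancy of the tail across the whole fiber, and it also transfers verbatim to polynomial maps of $\CC^k$ with maximal topological degree (Section 6), where Böttcher coordinates are not available in the same form. Both proofs are valid; yours simply does not serve as the template for the rest of the paper.
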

\begin{proof}
Suppose that we have $c \in \CC$ and $a_0, a_1, \ldots$ such that
$$
\sum_{n=0}^{\infty}a_nf^n(w) = c,
$$
for all $w \in \CC$. We need to show that $a_n = 0$ for all $n \in \NN$, so let us suppose for the purpose of contradiction that $a_{N-1} \neq 0$. Choose $p \in \CC$ generically such that there are exactly $d^{N}$ distinct pre-images $f^{N}(p)_j$, and write $S$ for the set of all such pre-images. For every $w \in \CC$ we have the following:
\begin{align}\label{eqn1}
\sum_{n=0}^{N-1} a_n f^n(w) = c - \sum_{n= N}^{\infty}a_n f^n(w).
\end{align}

The right hand side of \eqref{eqn1} is constant on the set $S$, so the left hand side is also constant on $S$. But the left hand side is a polynomial of degree at most $d^{N-1}$, and $S$ contains $d^{N}$ distinct points. Hence the left hand side of \eqref{eqn1} must be constant and $a_{N-1} = 0$ which is a contradiction.
\end{proof}

The idea of this proof will be used to show that in many cases $f$ does not have a local time average. The difficulty is that typically only a small portion of the pre-images $f^{N}(p)_j$ lie in the neighborhood $U(z)$, so we cannot automatically use the same argument. The main idea is to first show that the maps $F_N$ converge uniformly in a larger set $\tilde{U}$, so that $\tilde{U}$ contains at least a few of the pre-images. Then, we use the monodromy group of $f^n$ to include the rest of the pre-images, we will describe this method more precisely in the next section.

First we will show that a polynomial does have a time average near a point that is eventually mapped into a Siegel disc. The proof does not rely on the fact that the map is a polynomial, so we will prove the result in slightly greater generality.

\begin{theorem}\label{Siegel}
Let $f: \CC \rightarrow \CC$ be an entire function and let $V
\subset \CC$ be a Fatou component of $f$ that is eventually mapped
onto a periodic Fatou component that is either a Siegel disc or a
Herman ring. Let $U$ be a relatively compact subset of $V$. Then there exists a non-zero sequence $a_0, a_1 \ldots \in
\CC$ such that
$$
\sum_{n=0}^{N} a_n f^N(w) \rightarrow 0 \; \; \mathrm{as} \; \; N
\rightarrow \infty,
$$
uniformly for $w \in U$.
\end{theorem}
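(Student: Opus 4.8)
The plan is to exploit that on the relevant Fatou component $f$ is conjugate to an irrational rotation, and to design the weights so that long weighted blocks of iterates approximate an average over a \emph{full} rotation; such an average annihilates every non-constant Fourier mode and leaves only a single constant.

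First I would reduce to the model case. Let $W$ be the periodic component of period $p$ onto which $V$ is eventually mapped, say after $\kappa$ steps, and set $g = f^p \colon W \to W$, which by hypothesis is conjugate through a linearizer $\phi$ to an irrational rotation $\eta \mapsto \lambda \eta$, with $\lambda = e^{2\pi i \theta}$, of a disc (Siegel case) or of an annulus (Herman case). Since $U \subset\subset V$ we have $f^\kappa(U) \subset\subset W$, so $\zeta := \phi(f^\kappa(w))$ ranges over a compact subset of the disc resp. annulus as $w$ ranges over $U$. I would then restrict the support of the sequence $(a_n)$ to indices $n \equiv \kappa \pmod p$; for such $n = \kappa + ps$ one has $f^n(w) = g^s(f^\kappa(w)) = \phi^{-1}(\lambda^s \zeta)$, which collapses the problem to the period-one rotation model. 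The Siegel and Herman cases are then handled uniformly through the Taylor resp. Laurent expansion $\phi^{-1}(\eta) = \sum_j c_j \eta^j$, giving $f^{\kappa+ps}(w) = \sum_j c_j \lambda^{sj} \zeta^j$, a series converging uniformly for $w \in U$ because $\zeta$ stays in a compact set where $\sum_j |c_j| |\zeta|^j < \infty$.

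The key mechanism is roots-of-unity averaging. For a large integer $M$ one has $\frac1M \sum_{l=0}^{M-1} \phi^{-1}(e^{2\pi i l/M}\zeta) = \sum_{M \mid j} c_j \zeta^j$, which converges uniformly to the constant term $c_0$ (the center of the disc, in the Siegel case) as $M \to \infty$, since the surviving tail $\sum_{|j| \ge M} |c_j| |\zeta|^j$ tends to $0$. Because $\theta$ is irrational the orbit $\{s\theta \bmod 1\}$ is dense, so for each $l$ I can choose an arbitrarily large $s_l$ with $\| s_l \theta - l/M \| < \epsilon_M$; then $\lambda^{s_l}$ approximates the root of unity $e^{2\pi i l/M}$, and by uniform continuity of $\phi^{-1}$ on the relevant compact set the genuine average $\frac1M \sum_l f^{\kappa + p s_l}(w)$ lies within $C\epsilon_M + o_M(1)$ of $c_0$, uniformly on $U$. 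This yields, for each $M$, a finite weighted block of iterates approximating the constant $c_0$.

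Finally I would assemble these blocks into a single series. Choose disjoint, increasing index blocks $B_M$ (block $M$ consisting of the $M$ indices $\kappa + p s_l^{(M)}$ produced above) and set $a_n = d_M/M$ for $n \in B_M$, where $(d_M)$ is a fixed nonzero sequence with $d_M \to 0$ and partial sums $D_M = \sum_{M' \le M} d_{M'} \to 0$. The partial sum of $\sum a_n f^n$ taken up to the end of block $M$ then equals $c_0 D_M$ up to a summable error, hence tends to $0$. The main obstacle is to control partial sums that terminate \emph{inside} a block: the incomplete block contributes $\frac{d_M}{M}\sum_{l<L}\phi^{-1}(\lambda^{s_l}\zeta)$, which is not close to a multiple of $c_0$ but is bounded in modulus by $|d_M|\sup|\phi^{-1}|$. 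The decisive point is that this fluctuation is governed by $|d_M| \to 0$, so it too vanishes uniformly; upgrading convergence at block endpoints to convergence along the full sequence $N \to \infty$, while keeping the cumulative block-approximation errors summable (by letting $\epsilon_M \to 0$ and $M \to \infty$ fast enough), is the technical heart of the argument. Since $D_M \to 0$ and $(d_M)$ is nonzero, this produces a nonzero sequence with $\sum_{n=0}^{N} a_n f^n(w) \to 0$ uniformly on $U$, as required.
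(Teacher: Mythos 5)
Your reduction to the rotation model and the roots-of-unity averaging within a single block are fine, but the final assembly has a genuine gap. Write $e_M(w)$ for the deviation of the $M$-th block average $\frac{1}{M}\sum_l \phi^{-1}\bigl(\lambda^{s_l^{(M)}}\zeta(w)\bigr)$ from $c_0$. The partial sum at the end of block $M$ is
$$
c_0 D_M + \sum_{M'\le M} d_{M'}\, e_{M'}(w).
$$
Making the block errors summable, as you propose, only guarantees that the second term \emph{converges}; it converges to $E(w)=\sum_{M'} d_{M'} e_{M'}(w)$, which is in general a nonzero and, worse, non-constant holomorphic function of $w$: each $e_{M'}(w)$ consists of the surviving non-constant Fourier modes $\sum_{M'\mid j,\, j\neq 0} c_j\zeta^j$ plus the rotation-approximation error, and nothing in your construction forces these contributions to cancel across blocks. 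So your series converges uniformly, but to $E(w)$ rather than to $0$ --- indeed not to any constant --- which is precisely what the theorem must produce. Choosing $\epsilon_M$ small makes $E$ small but cannot make it vanish once the sequence is fixed, and "summable error, hence tends to $0$" is exactly the false step.

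The paper's proof avoids this by building cancellation into the weights instead of relying on independent block averages. It uses only the recurrence property $\|f^m-f^n\|_U<\epsilon$ for suitable $m>n$ (no linearization or Fourier analysis is needed): each new group of iterates is chosen, with opposite signs and halved weights, to approximate the terms already present, so that the running partial sum is explicitly driven to $0$ at the end of each stage, while the individual weights tend to $0$, which controls the partial sums that stop in the middle of a group. Your construction could be repaired along the same lines --- have block $M+1$ approximate and subtract the accumulated error of blocks $1,\dots,M$ rather than re-approximate $c_0$ from scratch --- but as written the cumulative error term blocks the conclusion.
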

\begin{proof}
Let $N \in \NN$ be such that $f^{N}(V)$ is a periodic Fatou
component.

It follows that for every $n \ge N$ and every $\e > 0$ we can
choose $m> n$ such that $|f^m(w) - f^n(w)| < \e$ for all $w \in U$.
We now inductively construct a sequence $a_N, a_{n_1}, a_{n_2}
\ldots$, the remaining weights are chosen equal to $0$. First we
choose $n_1> N$ such that
$$
\|f^N - f^{n_1} \|_U < \frac{1}{2}.
$$

Next, we choose $n_3 > n_2 > n_1$ such that
\begin{align*}
\|f^N - f^{n_1} + \frac{1}{2} f^{n_2} - \frac{1}{2} f_{n_3}\|_U
\le \\
\|\frac{1}{2} f^N - \frac{1}{2} f^{n_1}\| + \|\frac{1}{2} f^N -
\frac{1}{2} f^{n_3}\| + \|\frac{1}{2} f^{n_1} - \frac{1}{2}
f^{n_2}\| <  \frac{1}{4}.
\end{align*}

We continue by choosing $n_9 > \cdots > n_3$ such that
$$
\|f^N - f^{n_1} + \frac{1}{2} f^{n_2} - \frac{1}{2} f_{n_3} +
\frac{1}{4} f^{n_4} - \frac{1}{4} f^{n_5} - \frac{1}{4} f^{n_6} +
\frac{1}{4} f^{n_7} + \frac{1}{4} f^{n_8} - \frac{1}{4}
f^{n_9}\|_U < \frac{1}{8}
$$

Here $f^{n_4}$ is chosen very close to $f^{n_1}$ and $f^{n_5}$ is
chosen very close to the map that $f^{n_1}$ was very close to,
namely $f^N$. Similarly $f^{n_6}$ is chosen very close to
$f^{n_2}$ and $f_{n_7}$ approximates the map that $f^{n_2}$ was
very close to, namely $f^{n_1}$. And finally $f^{n_8}$
approximates $f^{n_3}$ and $f^{n_9}$ is close to $f^N$, the map
that $f^{n_3}$ is approximating.

Next $8$ higher iterates $f^{n_{10}} \cdots f^{n_{17}}$ are selected
with weights $a_{n_{10}} = \cdots = a_{n_{17}} = \frac{1}{8}$, to make
sure that the total error is less than $\frac{1}{16}$.

We continue the construction of the sequence $\{a_{n_j}\}$. We see
that the errors after the introduction of each group of
$a_{n_j}$'s converges to $0$. Furthermore, we have that $\|a_{n_j}
f^{n_j}\|_U$ converges to zero, and the estimate on the size of
$$
f^N + \sum_{j=1}^{N} a_{n_j} f^{n_j},
$$
does not increase as $N$ is increased by $2$. This completes the
proof.
\end{proof}

So note that for a point $z \in V$ we can choose any relatively compact neighborhood $U \subset \subset V$. In fact, by constructing the sequence $a_0, a_1, \ldots$ with a little care we can make sure that the sum $\sum a_n f^n$ converges uniformly on any relatively compact subset $U$.

\section{Monodromy groups}

In the proof of Theorem \ref{global} we introduced our main argument to show the non-existence of time averages. To apply this argument for local time averages we will use the monodromy groups of the iterates $f^n$. The next Lemma shows the key idea and will be used throughout the rest of the paper.

\begin{lemma}\label{monodromy}
Let $f$ be a polynomial of degree at least $2$, let $z \in \CC$, and let $a_0, a_1, \ldots \in \CC$ be such that
$$
\sum_{n = 0}^{\infty} a_n f^n = c
$$
uniformly in a neighborhood $U(z)$. Let $N \in \NN$, choose $p \in \CC$ with $d^N$ distinct pre-images $f^{-N}(z)_j$, and write $S$ for the set of these pre-images. Further suppose that there exists a nested sequence of sets $S_0 \subset S_1 \subset \cdots \subset S_m = S$ such that $S_0 \subset U$ and for each $j = 1, \ldots m$ there exists a permutation $\sigma_j$ in the monodromy-group of $f^N$ with $S_j = \sigma_j(S_{j-1}) \cup S_{j-1}$ and $\sigma_j(S_{j-1}) \cap S_{j-1} \neq \emptyset$. Then $a_0 = a_1, \ldots = a_{N-1} = 0$.
\end{lemma}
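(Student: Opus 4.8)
The plan is to run the degree argument from the proof of Theorem \ref{global}, using the monodromy hypothesis to upgrade the conclusion ``$P$ is constant on the pre-image set'' from the small piece $S_0 \subset U$ to all of $S$. Write $P(w) := \sum_{n=0}^{N-1} a_n f^n(w)$, a polynomial of degree at most $d^{N-1}$. Since the degrees $1, d, d^2, \ldots, d^{N-1}$ of the maps $f^0 = \mathrm{Id}, f, \ldots, f^{N-1}$ are pairwise distinct (as $d \ge 2$), it suffices to prove that $P$ is a constant polynomial: comparing leading terms then forces $a_{N-1} = 0$, then $a_{N-2} = 0$, and so on, and because $f^0 = \mathrm{Id}$ has degree $1$ the final comparison kills $a_0$ as well. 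To get that $P$ is constant it is in turn enough to show that $P$ takes a single value on the $d^N$ points of $S$, since $\deg P \le d^{N-1} < d^N$.

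For the first step, let $q$ denote the common image $f^N(w)$ of the points $w \in S$; by the choice of $S$ this is a regular value of $f^N$. For $w \in S$ and $n \ge N$ one has $f^n(w) = f^{n-N}(q)$, independent of $w$, so the tail $\sum_{n \ge N} a_n f^n$ is constant on $S$. Combining this with $\sum_{n \ge 0} a_n f^n = c$ on $U \supset S_0$ gives $P \equiv \kappa$ on $S_0$ for a constant $\kappa$. I would then record the sharper, germ-level statement that drives the induction. Near $q$ choose the $d^N$ local holomorphic inverse branches $\psi_1, \ldots, \psi_{d^N}$ of $f^N$, with $\psi_i(q)$ running over $S$. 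Repeating the tail computation with $w = \psi_i(u)$ for $u$ near $q$ and using the uniform convergence on $U$, one sees that whenever $\psi_i(q) \in S_0$ the germ $P \circ \psi_i$ at $q$ equals the single germ $H := c - \sum_{k \ge 0} a_{N+k} f^k$; in particular $H(q) = \kappa$, and all branches lying over $S_0$ share this one germ.

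The heart of the proof is an induction on $j$ with the invariant: every branch lying over $S_j$ has germ $H$ at $q$. Given a monodromy permutation $\sigma_j$, realized by a loop $\gamma_j$ in the complement of the critical values of $f^N$, analytically continuing the (identical) germs of the branches over $S_{j-1}$ along $\gamma_j$ carries them onto the branches over $\sigma_j(S_{j-1})$, which therefore all acquire one common continued germ $\widetilde H$. The hypothesis $\sigma_j(S_{j-1}) \cap S_{j-1} \neq \emptyset$ is exactly what closes the loop: a point in this intersection is covered by a single branch, whose germ is at once $\widetilde H$ (as it lies over $\sigma_j(S_{j-1})$) and $H$ (as it lies over $S_{j-1}$), forcing $\widetilde H = H$. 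Hence every branch over $S_j = S_{j-1} \cup \sigma_j(S_{j-1})$ again has germ $H$, completing the induction. At $j = m$ every branch over $S = S_m$ has germ $H$, so $P(\psi_i(q)) = H(q) = \kappa$ for every $i$, i.e.\ $P \equiv \kappa$ on all of $S$; the degree count then makes $P$ constant and kills every coefficient as above.

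I expect the main obstacle to lie in the bookkeeping of the third step: the correct quantity to propagate is the \emph{germ} $P \circ \psi_i$ rather than the mere value $P(\psi_i(q))$, since plain values are not preserved by the monodromy permutations whereas germs transform predictably under analytic continuation, and it is the intersection hypothesis that collapses the continued germ $\widetilde H$ back onto $H$. The other point needing care is justifying, from uniform convergence on $U$ alone, that every branch over $S_0$ genuinely shares the germ $H$; this rests on the observation that for $u$ near $q$ the expression $P(\psi_i(u)) = c - \sum_{k \ge 0} a_{N+k} f^k(u)$ no longer depends on the index $i$.
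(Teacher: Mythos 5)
Your proposal is correct and follows essentially the same route as the paper's proof: both define $F_{N-1}=\sum_{n=0}^{N-1}a_nf^n$, identify the common germ of $F_{N-1}\circ(f^{-N})_i$ over $S_0$ with the tail $c-\sum_{k\ge 0}a_{N+k}f^k$, propagate this identity of germs along the monodromy loops using the intersection hypothesis, and conclude by the degree count $|S|=d^N>d^{N-1}\ge\deg F_{N-1}$. Your write-up is merely more explicit about the germ-versus-value bookkeeping, which the paper leaves implicit.
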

\begin{proof}
We have that
\begin{align}\label{eqn2}
F_{N-1}(w) := \sum_{n = 0}^{N-1} a_n f^n(w) = c - \sum_{n = N}^{\infty} a_n f^n (w) \; \; \mathrm{for} \; \; w \in U.
\end{align}

Let $V$ be a small neighborhood of $p$ such that the inverse branches $\{(f^{-N})_j\}$ are well-defined on $V$. If $i, j$ are such that $(f^{-N})_i$ and $(f^{-N})_j$ both map $V$ into $U$ then it follows from Equation \eqref{eqn2} that

\begin{align}\label{eqn3}
F_{N-1}\circ (f^{-N})_i = F_{N-1}\circ (f^{-N})_j.
\end{align}

We can extend the inverse branches in a neighborhood of any closed loop that avoids the critical values of $f^N$. Any element in the monodromy group of $f^N$ can be represented by a closed loop at $p$. By following the inverse branches along a loop representing $\sigma_1$ it follows from the hypotheses on $\sigma_1$ that Equation \eqref{eqn3} holds for all $i, j \in S_1$. By applying the same argument for $\sigma_2, \sigma_3$ and so on we obtain that Equation \eqref{eqn3} holds for $i, j \in S_m = S$. But that means that $F_{N-1}$ is constant on the set $S$, a set whose order is larger than the degree of $F_{N-1}$, hence $F_{N-1}$ is constant. But then $a_{N-1} = 0$ and thus $a_{N-2} = 0$ and so on.
\end{proof}

In the case that $z$ lies in the escaping set $I(f) = \{z \in \CC \mid |f^n(z)| \rightarrow \infty\}$ we can use Lemma \ref{monodromy} without having any information about the monodromy-group of $f$. We denote by $G$ the Green's function of $f$ given by
\begin{align}
G(z) = \lim_{n \rightarrow \infty} \frac{1}{d^n} \mathrm{log}^+ |f^n(z)|.
\end{align}

The function $G$ is harmonic on the escaping set and measures how fast a point $z$ escapes to infinity. Note that if $z$ and $w$ satisfy $f^n(z) = f^n(w)$ for some integer $n$ then $G(z) = G(w)$. We will use the fact that for $z$ in a neighborhood of infinity the level curve $\{w \in \CC \mid G(w) = G(z)\}$ is connected.

\begin{lemma}\label{monodromy-escaping}
Let $f$ be a polynomial of degree at least $2$, and let $z \in
\CC$ lie in the escaping set of $f$. Let $U$ be an open
neighborhood of the connected component of $\{w \in \CC \mid G(w)
\le G(z)\}$ that contains $z$.

Suppose that $f^N$ has $d^N$ distinct pre-images $\zeta_1, \ldots \zeta_{d^N}$ of a point $p \in
\CC$, with $G(\zeta_j) = G(z)$
for all $j$. Suppose further that $N \in \NN$ is chosen large enough such
that the number of pre-images $\zeta_j \in U$ is strictly larger
than the number of connected components of $\{w \in \CC \mid G(w)
\le G(z)\}$.

Then there exists a complete cycle $\sigma \in S^{d^N}$ in the
monodromy-group of $f^N$ that maps a pre-image $\zeta_j \in U$
back into $U$.
\end{lemma}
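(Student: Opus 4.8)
The plan is to take for $\sigma$ the monodromy around infinity and to analyse its cyclic action on the preimages through external angles. Near infinity $f^N$ is conjugate, via its B\"ottcher coordinate, to $w \mapsto w^{d^N}$ (the high level curves of $G$ are connected there, as recalled above), so the loop winding once around a sufficiently large circle represents a complete $d^N$-cycle $\sigma$ in the monodromy group of $f^N$. Choosing $p$ generically, as the hypotheses permit, so that the external rays of $p$ and of its preimages avoid precritical points, every $\zeta_j$ acquires a well-defined external angle; since $f^N$ multiplies external angles by $d^N$, these are the $d^N$ equally spaced values $(\Theta + \ell)/d^N$, $\ell = 0, \dots, d^N - 1$, where $\Theta$ is the angle of $p$. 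The cycle $\sigma$ carries each preimage to the one whose angle comes next, so $\sigma$ visits the $\zeta_j$ in order of increasing external angle.

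It then remains to control the positions, in this angular order, of the preimages lying in $U$. Let $C_0$ denote the component of $\{w : G(w) \le G(z)\}$ containing $z$ and $C_1, \dots, C_{k-1}$ the others, and normalise $U$ (which we may arrange from the start) so that it meets the level set $\{w : G(w) = G(z)\}$ only along $\partial C_0$; then the preimages in $U$ are exactly those on $\partial C_0$. The geometric heart is the arc structure of the external rays. Writing $\Omega = \{w : G(w) > G(z)\}$ for the connected sea surrounding the $k$ islands, the function $G - G(z)$ is the Green's function of the $k$-connected domain $\Omega$ with pole at infinity, and so has exactly $k - 1$ critical points in $\Omega$. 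The two ascending separatrices through each such saddle run out toward infinity and thus determine at most $2(k-1)$ critical external angles, which cut the angle circle into at most $2(k-1)$ arcs; on each arc the landing point of the ray stays on a single boundary curve $\partial C_m$. Hence every arc is associated to one island and every island absorbs at least one arc, so the distinguished island $C_0$ is associated to at most $(k-1)$ of the arcs.

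The conclusion is a pigeonhole argument. The preimages in $U$ are precisely those whose external angle lies in one of the at most $k-1$ arcs associated to $C_0$, and within each such arc they are consecutive in the angular order, hence in the cyclic order of $\sigma$; they therefore form at most $k-1$ cyclic runs. Since by hypothesis there are strictly more than $k$ of them, some run contains at least two preimages, and two neighbours $\zeta_j, \sigma(\zeta_j)$ in that run both lie in $U$ --- which is exactly the assertion, with $\sigma$ the required complete cycle. I expect the main obstacle to be the middle paragraph: making rigorous the identification of each island with a union of angular arcs, and in particular justifying the count of $k-1$ critical points of the Green's function together with the resulting arc decomposition of the circle. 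The remaining points, namely the genericity of $p$ and the normalisation of $U$ along $\partial C_0$, are routine and can be arranged at the outset.
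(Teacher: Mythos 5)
Your proof is correct and follows essentially the same route as the paper: the complete cycle is the monodromy around infinity (the paper realizes it as the loop that follows the external ray outward from $p$, goes once around a connected high level curve of $G$, and returns), and the existence of two adjacent preimages in $U$ comes from the same pigeonhole on the number of components of the sublevel set. Your middle paragraph --- counting the $k-1$ critical points of the Green's function of $\Omega$ and the resulting decomposition of the circle of external angles into arcs --- is a more explicit justification of the step the paper treats informally (the splitting of the level curve each time components merge, which happens precisely at those critical points), so it is a welcome sharpening rather than a different argument.
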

\begin{proof}
We construct a loop $\gamma$ that defines the cycle $\sigma$ explicitly. The point $p$ necessarily lies in the escaping set. We note that we may assume that the outward external ray through $p$ does not hit any critical values of $f^N$, if it does then we can start $\gamma$ by following the level curve of $G$ slightly.

The loop $\gamma$ will consist of three pieces, $\gamma_1$, $\gamma_2$ and $\gamma_3$. We let $\gamma_1$ follow the outward external ray through $p$ far enough to a point $q$ so that all pre-images $f^{-N}(q)_j$ lie on a simple level-curve of $G$ that is connected.

Then $\gamma_2$ follows one full clockwise rotation along the level curve of $G$. It follows that the pre-images of $q$ must also rotate clockwise along the inverse branches of $\gamma_2$ until they arrive at the next pre-image of $q$.

Finally we let $\gamma_3$ follow the external ray back from $q$ to $p$.

It is clear that $\gamma_2$ induces a complete cycle on the pre-images of $q$, and therefore $\gamma$ induces a complete cycle on the pre-images of $p$. To see that at least one pre-image in $U$ must be mapped into $U$ we claim that there exist at least two pre-images of $p$ in $U$ whose inverse branches of $\gamma_1$ lead to adjacent pre-images on the connected level curve of $G$. Of course the pre-images in $U$ start out lying adjacent on their level curve. When we follow $\gamma_1$ the level curve may split every time the connected component joins with one or more of the other connected components. The claim follows from the fact that there are more pre-images of $p$ than connected components of the level curve $\{w \in \CC \mid G(w) = G(z)\}$.
\end{proof}

Note that the monodromy group is independent of the point $p$, and hence there always exists a complete cycle. The important point is that the cycle we constructed above maps a point in $U$ to a point in $U$. It is exactly this property that allows us to prove the following result.

\begin{theorem}\label{escaping}
Let $f$ be a polynomial and let $z \in I(f)$. Then $f$ does not have a time average near $z$.
\end{theorem}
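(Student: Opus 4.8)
The plan is to argue by contradiction and reduce the theorem to the two monodromy lemmas above. Suppose $f$ does have a time average near $z$, so that there is a non-zero sequence $a_0, a_1, \ldots$ and a constant $c$ with $\sum_{n=0}^{\infty} a_n f^n = c$ uniformly on some small neighborhood $U_0$ of $z$. It suffices to show that $a_0 = a_1 = \cdots = a_{N-1} = 0$ for every $N \in \NN$, since this forces all $a_n = 0$, contradicting the assumption that the sequence is non-zero. Writing $K_z$ for the connected component of $\{w \in \CC : G(w) \le G(z)\}$ containing $z$, the strategy is: (i) first upgrade the uniform convergence from $U_0$ to an open neighborhood $\Omega$ of all of $K_z$; (ii) apply Lemma \ref{monodromy-escaping} on $\Omega$ to produce a complete cycle $\sigma$ in the monodromy group of $f^N$ carrying one pre-image in $\Omega$ to another; and (iii) feed this cycle into Lemma \ref{monodromy} to conclude.

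Step (i) is the heart of the argument. I would use the standard asymptotic $\log|f^n(w)| = d^n G(w) + O(1)$, valid uniformly on compact subsets of the escaping set. Since $G$ is harmonic and non-constant near $z$ it has no local maximum, so $U_0$ contains a point $w_1$ with $G(w_1) =: \rho_1 > G(z)$. Uniform convergence of the series on $U_0$ forces its general term to tend to zero uniformly, and evaluating at $w_1$ gives $|a_n| e^{d^n \rho_1} \to 0$. For any $w$ with $G(w) \le G(z) < \rho_1$ we then obtain
\[
|a_n f^n(w)| \le C\, |a_n| e^{d^n G(w)} \le C\,\bigl(|a_n| e^{d^n \rho_1}\bigr)\, e^{-d^n(\rho_1 - G(z))},
\]
which decays doubly exponentially. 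Hence the series converges absolutely and uniformly on $\{G \le G(z)\}$, and in fact on $\{G < G(z) + \e\}$ for a small $\e > 0$ with $G(z) + \e < \rho_1$; here I use that $\{G \le G(z)\}$ is compact, so that only finitely many initial terms need separate attention. On the open set $\{G < G(z) + \e\}$ the limit is holomorphic, and it equals $c$ on the nonempty open subset $U_0 \cap \{G < G(z) + \e\}$, so by the identity theorem it is identically $c$ on the component $\Omega$ of $\{G < G(z) + \e\}$ containing $z$. Since $K_z \subset \Omega$, this $\Omega$ is precisely the enlarged neighborhood required by Lemma \ref{monodromy-escaping}.

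For step (ii) I would fix a large $N$ and choose $p$ generically on the level curve $\{G = d^N G(z)\}$, so that $G(p) = d^N G(z)$ and $p$ has $d^N$ distinct pre-images $\zeta_1, \ldots, \zeta_{d^N}$ under $f^N$, all lying on $\{G = G(z)\}$. To verify the remaining hypothesis of Lemma \ref{monodromy-escaping}, note that $\{G \le G(z)\}$ has only finitely many components, since the critical values of $G$ above any positive level form a finite set, whereas the number of pre-images $\zeta_j$ lying in $\Omega$ tends to infinity with $N$: indeed $f^N$ restricts to a proper branched cover of $K_z$ onto $\{G \le d^N G(z)\}$ whose degree grows without bound, because $K_z$ is a nondegenerate continuum meeting the Julia set, on which the iterates $f^N$ cannot remain injective. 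Thus for $N$ large the number of pre-images in $\Omega$ exceeds the number of components, and Lemma \ref{monodromy-escaping} produces a complete cycle $\sigma$ in the monodromy group of $f^N$ with $\zeta_a, \sigma(\zeta_a) \in \Omega$ for some pre-image $\zeta_a$.

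Finally, for step (iii), I apply Lemma \ref{monodromy} with this same $p$, the neighborhood $U = \Omega$, and $S$ the full set of pre-images. Taking $S_0 = \{\zeta_a, \sigma(\zeta_a)\} \subset \Omega$ and $\sigma_j = \sigma$ for every $j$, the sets $S_j = \sigma(S_{j-1}) \cup S_{j-1}$ are the consecutive blocks $\{\zeta_a, \sigma\zeta_a, \ldots, \sigma^{j+1}\zeta_a\}$ of the cycle; they are nested, successive blocks overlap in at least one point, and since $\sigma$ is a complete cycle they exhaust $S$ after $d^N - 2$ steps. Lemma \ref{monodromy} then gives $a_0 = \cdots = a_{N-1} = 0$, and as $N$ may be taken arbitrarily large this forces every $a_n$ to vanish, the desired contradiction. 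The main obstacle is step (i), namely controlling precisely where the series converges, for which the Green's-function growth estimate is exactly the right tool; the degree-growth count in step (ii) is the other point that requires care.
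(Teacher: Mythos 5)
Your proposal is correct, and it follows the same overall strategy as the paper: first enlarge the domain of uniform convergence until it contains a neighborhood of the whole connected component of $\{w : G(w) \le G(z)\}$ through $z$, then invoke Lemma \ref{monodromy-escaping} to produce a complete cycle carrying a pre-image of that neighborhood back into it, and finally feed the resulting nested chain of sets into Lemma \ref{monodromy} to kill $a_0,\ldots,a_{N-1}$ for every $N$. The one place where you genuinely diverge is the enlargement step, which is also where the paper spends most of its effort. The paper pushes $U$ forward until $f^N(U)$ wraps around the filled Julia set and contains a full level curve of $G$, applies the maximum principle to the enclosed region, and pulls back by $f^{-N}$; you instead pick a point $w_1 \in U_0$ with $G(w_1) = \rho_1 > G(z)$ (available since $G$ is harmonic and non-constant), extract the decay rate $|a_n| e^{d^n \rho_1} \to 0$ from uniform convergence at $w_1$, combine it with the global bound $\log^+|f^n| \le d^n G + O(1)$ to get absolute uniform convergence on all of $\{G \le G(z)+\epsilon\}$, and identify the limit as $c$ via the identity theorem. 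Your route is more quantitative and sidesteps the somewhat delicate claim that $f^N(U)$ eventually contains a large circle; the paper's route avoids explicit growth estimates. The two points you leave informal --- finiteness of the number of components of $\{G \le G(z)\}$ and the unbounded growth of the number of pre-images of $p$ landing in $K_z$ --- are treated no more carefully in the paper itself, and both follow from the standard structure of the sublevel sets of the Green's function, so I see no genuine gap.
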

\begin{proof}
Let $U$ be a neighborhood of $z$ and assume that there exist $c \in \CC$ and sequence $a_0, a_1, \ldots \in \CC$ such that
\begin{align}\label{eqn4}
\sum_{n=0}^{n=\infty} a_n f^n \rightarrow c,
\end{align}
uniformly on $U$.

Since $f^n(z)$ converges to infinity and the polynomial $f$ behaves like its highest degree term near infinity, one can easily see that for large $N \in \NN$, the set $f^N(U)$ must contain a large circle centered at the origin, and in particular must contain the whole (connected) level curve of the Green's function $G$ through  $f^N(z)$. Since the sum on the left hand side of Equation \eqref{eqn4} converges uniformly on $U$ we have see the sum
$$
\sum_{m=0}^{m=\infty} a_{m+N} f^m
$$
converges uniformly on $f^N(U)$, and by the maximal principle this sum must also converge uniformly on the area $V$ enclosed by $f^N(U)$.

It follows that the sum $\sum_{n=0}^{n=\infty} a_n f^n$ converges uniformly on $f^{-N}(V)$. Let us denote by $\tilde{U}$ the connected component of $f^{-N}(V)$ that contains $z$. Then we have that for large $N$ the set $\tilde{U}$ must contain the connected component of the level curve of $G$ through $z$. By connectivity we have that Equation \ref{eqn4} holds on $\tilde{U}$.

Now choose $N$ large enough and $p$ a point with $d^N$ pre-images of $f^N$, some of which lie on the level curve of $G$ through $z$. Let $S$ be the set of all pre-images $f^{-N}(p)_j$ and let $\sigma$ be the complete cycle acting on $S$ that we constructed in Lemma \ref{monodromy-escaping}. Define $S_0 = S \cap \tilde{U}$ and $S_{j+1} = S_j \cup \sigma(S_j)$ for $j \in \NN$. Then $S_{d^N} = S$ and Lemma \ref{monodromy} gives that $a_{N-1} = a_{N-2} = \cdots = a_1 = 0$. Since the argument holds for arbitrarily large $N \in \NN$ we see that $f$ does not have a time average at $z$.
\end{proof}

Let us recall a few well-known definitions and results regarding the dynamics of polynomials in the complex plane. We denote by $F(f)$ and $J(f)$ respectively the Fatou set and Julia set of $f$. In the case of a polynomial the Julia set is exactly equal to the boundary of the escaping set. Hence Theorem \ref{escaping} immediately implies:

\begin{corollary}\label{julia}
Let $f$ be a polynomial and let $z \in J(f)$. Then $f$ does not have a time average near $z$.
\end{corollary}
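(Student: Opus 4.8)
The plan is to deduce this directly from Theorem~\ref{escaping} together with the topological fact, recalled in the paragraph preceding the corollary, that for a polynomial the Julia set coincides with the boundary of the escaping set, $J(f) = \partial I(f)$. The one genuinely new observation I need is that possessing a time average is an \emph{open} condition: if $f$ has a time average near $z$, then it has a time average near every point of the witnessing neighborhood.

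First I would argue for contradiction. Suppose $f$ has a time average near $z \in J(f)$, so there is an open neighborhood $U(z)$, a constant $c \in \CC$, and a non-zero sequence $a_0, a_1, \ldots$ with $F_N = \sum_{n=0}^N a_n f^n \to c$ uniformly on $U$. Since $U$ is open and $z \in J(f) = \partial I(f)$, and since $I(f)$ is itself open, every neighborhood of $z$ meets $I(f)$; in particular I can choose a point $w \in U \cap I(f)$.

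Next I would transport the time average from $z$ to $w$. Because $U$ is open and $w \in U$, there is a smaller neighborhood $U'$ of $w$ with $U' \subset U$. The very same constant $c$ and sequence $\{a_n\}$ now witness a time average near $w$: uniform convergence $F_N \to c$ on $U$ restricts to uniform convergence on $U' \subset U$. Thus $f$ has a time average near the escaping point $w \in I(f)$, which directly contradicts Theorem~\ref{escaping}. Hence no such time average near $z$ can exist.

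I do not expect a substantive obstacle here, which is consistent with the corollary being stated as an immediate consequence. The only point requiring any care is the locality/propagation step, and even that is routine once one notes that uniform convergence on $U$ passes to any subneighborhood. The two structural inputs---the identity $J(f) = \partial I(f)$ and the non-existence of time averages on $I(f)$ from Theorem~\ref{escaping}---then combine with no further work.
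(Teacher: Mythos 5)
Your argument is correct and is precisely the one the paper intends: the paper derives the corollary from Theorem~\ref{escaping} together with the fact that $J(f)=\partial I(f)$, so any neighborhood of $z$ contains an escaping point, and the time average restricts to a neighborhood of that point. The propagation step you spell out (uniform convergence on $U$ restricts to any subneighborhood) is exactly the routine detail the paper leaves implicit.
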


So from now on we only have to consider the case where $z$ lies in a bounded Fatou component of $f$. By Sullivan's Non-Wandering Theorem \cite{sullivan} we know that every Fatou component is eventually mapped onto a periodic Fatou component. Moreover, the periodic Fatou components are completely classified. A bounded Fatou component of a polynomial must be one of the following: (1) an attracting basin, (2) a super-attracting basin, (3) an attracting petal or (4) a Siegel disc. We have already solved the case of a Siegel disc but the other three cases remain. A clear difference between these three kinds of Fatou components and a Siegel disc is that the orbit of these Fatou components must contain a critical point. It is exactly that distinction that is relevant here, and it will allow us to prove the main conjecture \ref{main} for many polynomials.

\section{Polynomials with the maximal number of critical values}

In the previous section we saw how the iterated monodromy groups may be used to show the non-existence of a local time average. In Theorem \ref{escaping} we put no restrictions on the polynomial $f$, but made assumptions on the orbit of $z \in \CC$. In this section we will show that for a large class of polynomials Conjecture we can determine exactly what the monodromy group is and use this to prove that Conjecture \ref{main} holds.

The iterated monodromy groups of a polynomial $f$ can be thought of as all acting on a the same tree of pre-images of a point $p$, with on the first level of the tree the pre-images $f^{-1}(p)_j$, and on the second level of the tree the pre-images of those points, i.e. the pre-images $f^{-2}(p)$ and so on. The monodromy elements must respect the structure of this tree, so the action of such an element on higher levels of the tree is restricted by the action on the lower levels. We refer the reader to the book \cite{Nekrabook} for a more detailed explanation.

 Let us start by computing the iterated monodromy groups for the example $f(z) = z^2 + 1$. The polynomial $f$ has only one critical value, namely the point $1$, so the monodromy group of $f$ is generated by a single element. Let us represent this element by a closed loop $\gamma_1$ that starts at the point $0$, then moves towards the critical value $1$ along the real axis, then turns clock-wise once around $1$ and moves back along the real axis to the starting point $0$. The inverse images of $0$ are the points $i$ and $-i$. When we follow the inverse branches along the loop $\gamma_1$ we see that $i$ goes to $-i$ and $-i$ goes to $+i$. So the loop $\gamma_1$ induces a switch of the two inverse images $i$ and $-i$.

The map $f^2 = f \circ f$ has two critical values, $1$ and $2$. We introduce the loop $\gamma_2$ that moves from $0$ to $1-\e$, then makes half a clockwise turn around $1$ to $1+ \e$, follows the real axis to $2-\e$, turns clockwise around $2$ and follows the original path back to $0$. The loops $\gamma_1$ and $\gamma_2$ generate the fundamental group of $\CC \setminus \{1. 2\}$ so the monodromy elements induced by $\gamma_1$ and $\gamma_2$ generate the monodromy group of $f^2$. To understand the monodromy group we need to know how the inverse images of $\gamma_1$ and $\gamma_2$ act on the four inverse images $f^{-2}(0)_j$, so on the two inverse images $f^{-1}(i)_j$ and $f^{-1}(-i)_j$.

First note that the loop $\gamma_2$ does not wind around the critical value $1$, so an inverse image of $i$ must go to an inverse image of $i$ and likewise for $-i$. An explicit computation shows that the inverse images of $i$ switch while the inverse images of $-i$ stay fixed when following the inverse branches of $\gamma_2$.

Since $\gamma_1$ defined a switch on $i$ and $-i$ it necessarily maps the pre-images of $i$ to pre-images of $-i$ and vice-versa, but it does not have any further action on higher levels, that is, $\gamma^2$ gives the trivial monodromy element. One easily sees that $\gamma_1$ and $\gamma_2$ generate the full set of tree-automorphisms.

The following result was shown to the author by Rafe Jones.

\begin{theorem}\label{post-critical}
Let $f$ be a polynomial of degree $d \ge 2$, and assume that for every $n \in \NN$ the polynomial $f^n$ has $n(d-1)$ critical points. Then for every $N \in \NN$ the monodromy-group of $f^N$ is as large as possible, that is, equal to the entire group of tree-automorphisms.
\end{theorem}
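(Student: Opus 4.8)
The plan is to induct on $N$, viewing the monodromy group $\mathrm{Mon}(f^N)$ as a subgroup of the automorphism group $\mathrm{Aut}(T_N)$ of the truncated $d$-ary tree $T_N$ of pre-images and showing it is everything. First I unpack the hypothesis that the iterates attain the maximal number $n(d-1)$ of critical values: taking $n=1$ shows $f$ has $d-1$ distinct critical values and hence $d-1$ \emph{simple} critical points, while distinctness for all $n$ shows that no critical value $v_i=f(c_i)$ is ever mapped onto a critical point (otherwise two of the values $f^m(v_j)$ would collide), so the forward critical orbits are infinite and mutually disjoint. For the base case $N=1$ the group $\mathrm{Mon}(f)$ is transitive on the $d$ sheets because $f\colon\CC\to\CC$ is a connected branched cover, and it is generated by the $d-1$ transpositions around the simple critical values; a transitive group generated by transpositions is the full symmetric group, so $\mathrm{Mon}(f)=S_d=\mathrm{Aut}(T_1)$, exactly as in the worked example $f=z^2+1$.

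For the inductive step I assume $\mathrm{Mon}(f^{N-1})=\mathrm{Aut}(T_{N-1})$ and use the truncation homomorphism $\pi\colon\mathrm{Aut}(T_N)\to\mathrm{Aut}(T_{N-1})$ that forgets the bottom level, whose kernel is $K=\prod_v S_d$, one symmetric factor permuting the $d$ children of each level-$(N-1)$ vertex $v$. Since $\mathrm{CritVal}(f^{N-1})\subseteq\mathrm{CritVal}(f^N)$, the inclusion of punctured planes induces a surjection on fundamental groups, through which the level-$(N-1)$ action of $\mathrm{Mon}(f^N)$ factors as the full $\mathrm{Mon}(f^{N-1})$; hence $\pi(\mathrm{Mon}(f^N))=\mathrm{Aut}(T_{N-1})$. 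It therefore suffices to prove $\mathrm{Mon}(f^N)\supseteq K$, for then $\mathrm{Mon}(f^N)=\pi^{-1}(\mathrm{Aut}(T_{N-1}))=\mathrm{Aut}(T_N)$.

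To produce elements of $K$ I will examine the \emph{new} critical values $w_i=f^{N-1}(v_i)$, which by distinctness lie outside $\mathrm{CritVal}(f^{N-1})$. Because the critical orbits are disjoint and $f^{N-1}$ is unramified over $v_i$, the only critical point of $f^N$ over $w_i$ is $c_i$ (any other would force a collision among the values $f^m(v_j)$), and there $f^N$ has local degree $2$. Consequently a small loop around $w_i$ induces a single transposition $\tau_i$ exchanging the two leaves colliding at $c_i$; these share the parent $u_i\in f^{-(N-1)}(p)$ lying near $v_i$, and as the loop is null-homotopic in $\CC\setminus\mathrm{CritVal}(f^{N-1})$ it acts trivially on level $N-1$. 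Thus each $\tau_i\in\mathrm{Mon}(f^N)\cap K$ is a single transposition supported in the one factor indexed by $u_i$.

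The heart of the argument, and the step I expect to be the main obstacle, is to promote one such transposition to all of $K$. Fix $\tau=\tau_1$, a transposition $(ab)$ in the factor $u_1$. Conjugating $\tau$ by the vertex stabilizer $\mathrm{Stab}_{\mathrm{Mon}(f^N)}(u_1)$ keeps the support in that factor and realizes, inside it, every transposition in the image $H_{u_1}\le S_d$ of this stabilizer acting on the children of $u_1$. Here the recursive self-similarity of iterated monodromy groups (see \cite{Nekrabook}) is essential: the section of the stabilizer of $u_1$ onto the single level below it is precisely the monodromy group of $f$ on one fiber, which by the base case is the full $S_d$. A lone transposition would not generate $S_d$ in a factor without this fact, and verifying that the vertex stabilizer already acts as the full symmetric group on the children is the crux; granting it, the conjugates of $\tau$ fill out the entire factor $S_d^{(u_1)}$. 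Finally, since $\pi(\mathrm{Mon}(f^N))=\mathrm{Aut}(T_{N-1})$ is transitive on the level-$(N-1)$ vertices, conjugating $S_d^{(u_1)}$ by lifts transports it onto every factor, giving $\mathrm{Mon}(f^N)\supseteq K$ and closing the induction.
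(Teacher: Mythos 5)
Your argument is correct and follows essentially the same route as the paper's: induct on $N$, obtain $\mathrm{Mon}(f)=S_d$ from the $d-1$ transpositions around the simple critical values, and in the inductive step use the $d-1$ new critical values of $f^N$ to produce transpositions lying in the kernel of truncation to level $N-1$, which are then spread out by conjugation and transitivity. Where the paper conjugates the new transpositions into a single sibling set and appeals to transitivity, you conjugate one transposition by the vertex stabilizer and justify via self-similarity that its section on the children is all of $S_d$; this is the same mechanism, made explicit precisely at the step the paper leaves terse.
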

\begin{proof}
Let us first note that it follows from the proof of Lemma \ref{monodromy-escaping} that the monodromy-group of a polynomial of degree at least $2$ must always contain a complete cycle. So in particular it must be transitive.

We choose a base point $p$ distinct from a critical value of any iterate $f^n$. The first iterate $f$ has $d-1$ critical values $v_{1,1},\ldots, v_{1,d-1}$, all of multiplicity $1$. For each critical value $v_{1,j}$ we choose a closed loop $\gamma_{1,j}$ that starts at $p$, approaches $v_{1,j}$ very closely, winds around $v_{i,j}$ once and follows the original path back to $p$. When we look at the inverse branches of the loop $\gamma_{1,j}$ we notice that two branches lead very closely to the critical point $c_{1,j}$ that is mapped to $v_{1,j}$, while all the other branches lead to the $d-2$ inverse images of $v_{1,j}$ that are not critical points. When the loop winds around $v_{1,j}$ once, the two inverse branches that lead to the critical point are switched, while all other branches just wind around the non-critical point back to themselves. So we see that each loop $\gamma_{1,j}$ induces a monodromy action that switches two inverse images and fixes all other inverse images. Hence the monodromy group of $f$ is generated by $d-1$ $2$-cycles, and by transitivity we obtain all $2$-cycles and thus the whole permutation group.

Now we will proceed by induction. We assume that the monodromy group of $f^{N-1}$ is the tree automorphism group, and we will show the same holds for $f^N$. The polynomial $f^N$ has exactly $d-1$ critical points that are not critical points of $f^{N-1}$, we denote these points by $v_{N,1}, \ldots , v_{N,d-1}$. We construct closed loops $\gamma_{N,J}$ as for $N=1$. Again, each of these loops induces a 2-cycle $\sigma_{j}$ in the monodromy groups. Since by assumption the monodromy group of $f^{N-1}$ is the full tree automorphism group we can conjugate the $\sigma_j$ with elements in the monodromy group of $f^{N-1}$ (which we can view as a subgroup of the monodromy group of $f^N$) if necessary to guarantee that all the $\sigma_j$ switches two pre-images (of $f$) of the same point $f^{N-1}(p)_l$. Again by transitivity we obtain the whole permutation group on those $d$ pre-images and hence the entire group of tree automorphisms.
\end{proof}

We will see in Corollary \ref{solution} that having the monodromy groups to equal the full set of tree automorphisms is more than enough to show that Conjecture \ref{main} holds. We will need the following lemma.

\begin{lemma}\label{enlarge}
Let $f$ be a polynomial that has a time average near $z \in \CC$. Further assume that $z$ lies in a Fatou component $V$ that is eventually mapped onto a bounded periodic Fatou-component. If $V$ is eventually a periodic attracting basin or a periodic attracting petal then the maps $F_n$ defined in Equation \eqref{definition} converge uniformly on any relatively compact $\tilde{U} \subset V$. If $V$ is eventually mapped onto a periodic super-attracting basin then the maps $F_N$ converge uniformly on some connected open subset $\tilde{U} \subset V$ that contains a point which is eventually mapped to the super-attracting periodic point.
\end{lemma}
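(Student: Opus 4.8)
The plan is to enlarge the domain of uniform convergence of the partial sums $F_N$ from \eqref{definition} by exploiting that on $V$ the dynamics contracts toward the attracting (super-attracting, parabolic) point. The basic mechanism is the one already used in Theorem \ref{escaping}: writing $F_{N-1}$ for the polynomial partial sum and $G_N=\sum_{m\ge0}a_{m+N}f^m$ for the tail, the identity $\sum_{n\ge0}a_nf^n=F_{N-1}+G_N\circ f^N$ shows that $\{F_N\}$ converges uniformly on a set $A$ if and only if the partial sums of $G_N$ converge uniformly on $f^N(A)$. I will use two consequences repeatedly: convergence on $A$ propagates to every component of any preimage $f^{-N}(A)$ (pull-back), and, by the maximum principle, convergence on the boundary of a bounded domain forces convergence on its closure (filling in holes). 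Using the pull-back principle to move along the finitely many iterates carrying $V$ onto its eventual periodic image, and working within the periodic cycle, I reduce to the case that $V$ is the immediate basin of a fixed point $p_0$. The hypothesis then gives convergence on $U\ni z$, hence $G_N\to 0$ uniformly on the images $f^N(U)$, which accumulate at $p_0$.

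First I would establish convergence on a full neighborhood of $p_0$. In the geometrically attracting case I pass to the Koenigs coordinate $\phi$, injective on a round neighborhood $D$ of $p_0$ and conjugating $f$ to $s\mapsto\lambda s$ with $0<|\lambda|<1$; the images $f^N(U)$ then correspond to blobs scaling into the origin by the factor $\lambda$, and bridging successive blobs with the maximum principle yields uniform convergence of $\{F_N\}$ on all of $D$. The parabolic case is identical after replacing $\phi$ by a Fatou coordinate on an attracting petal, in which $f$ becomes $\zeta\mapsto\zeta+1$ on a half-plane model.

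Next I would exhaust the component. Writing the immediate basin as the increasing union $V=\bigcup_nW_n$, with $W_n$ the component of $f^{-n}(D)$ containing $p_0$, I push convergence from $W_n$ to $W_{n+1}$ along the inverse branches of $f$, using the maximum principle to fill the bounded regions that appear whenever an inverse branch meets a critical value (a geometrically attracting immediate basin always contains a critical point). Since any relatively compact $\tilde U\subset V$ lies in some $W_n$, this gives the stated conclusion in the attracting and parabolic cases. For a super-attracting basin I would instead use the Böttcher coordinate conjugating $f$ to $w\mapsto w^d$; the neighborhood step still applies on a Böttcher disc, but the conjugacy is ramified at $p_0$, so the pull-back cannot be carried through the center. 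Following only the inverse branches that avoid the critical orbit produces a connected open set $\tilde U$ containing a preimage of $p_0$, that is, a point eventually mapped to the super-attracting point, but not necessarily the whole component, which is exactly the weaker conclusion claimed.

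I expect the main obstacle to lie in the neighborhood step together with the critical-point bookkeeping: assembling the shrinking, possibly disconnected images $f^N(U)$ into a genuine neighborhood of $p_0$ in Step A, and then propagating convergence across the critical values inside the basin by the maximum principle while keeping the convergence uniform in Step B. The ramification at the center in the super-attracting case is what prevents filling the whole component and must be treated separately.
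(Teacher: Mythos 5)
Your treatment of the super-attracting case matches the paper's: iterate until $f^{M+N}(U)$ wraps around the periodic point (possible because $f^m$ behaves like $x\mapsto x^k$ with $k\ge 2$ there), fill in the enclosed region by the maximum principle, and pull back; the ramification at the center is indeed why only the weaker conclusion is claimed. The problem is that you then run the same geometric mechanism in the geometrically attracting and parabolic cases, and there it breaks down at exactly the step you flag as the main obstacle. In Koenigs coordinates the local map is $s\mapsto\lambda s$, which is \emph{injective}, so the images $f^N(U)$ are a sequence of blobs of diameter comparable to $|\lambda|^N$ marching toward the fixed point: they need not overlap (take $\lambda=1/2$ and $U$ a small disc around $1$; the images are disjoint discs on the positive real axis), their union is not a neighborhood of the origin, and none of them encloses the origin, so the maximum principle has no hole to fill. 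The winding that makes the super-attracting argument work is produced precisely by the local degree being at least $2$; with local degree $1$ there is nothing to ``bridge.'' The same objection applies verbatim in the Fatou coordinate $\zeta\mapsto\zeta+1$ on a petal. So your Step A does not produce convergence on a full neighborhood of $p_0$ in these two cases, and Step B has nothing to exhaust from.

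The paper avoids this by switching to a purely analytic argument in the attracting and parabolic cases: using the linearizing function $G(w)=\lim_n \alpha^{-n}(f^n(w)-q)$ (respectively standard petal estimates), one compares $f^n(w)$ for $w$ in a relatively compact $\tilde{U}\subset V$ with $f^n(z)$, and shows that the suitably normalized difference is a decreasing sequence; an Abel--Dirichlet type summability test (a convergent series times a positive decreasing sequence still converges) then transfers the convergence of $\sum a_nf^n(z)$ to convergence of $\sum a_nf^n(w)$, uniformly on $\tilde{U}$. If you want to keep your framework, you would need to replace your Step A in the non-super-attracting cases by some such rate-of-convergence argument; the maximum-principle route cannot be repaired there.
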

\begin{proof}
Let us first consider the case of a super-attracting basin. Let $U$ be the neighborhood of $z$ where the maps $F_N$ converge uniformly. We may assume that $U \subset V$ so that for $N$ large the set $f^{N}(U)$ lies in a small neighborhood of the attracting periodic cycle $w_0, w_1, \ldots , w_m = w_0$. If $f^m$ has a critical point of order $k$ at $w_0$ then the map $f^m$ behaves very similarly to $x \mapsto x^k$ in a neighborhood of $w_0$. Hence for large $M$ the set $f^M(f^N(U))$ will contain a small circle around the periodic point. The sum $\sum_{l=0}^{\infty}a_{N+M+l}f^l$ converges uniformly on $f^{M+N}(U)$, hence by the maximum principle also in the area enclosed by $f^{M+N}(U)$, which we will denote by $\widehat(U)$. Hence the polynomials $F_N$ converge uniformly on the set $f^{-M-N}(\widehat{U})$, so in particular on the connected component $\tilde{U} \subset f^{-M-N}(\widehat{U})$ that contains $U$. Since $\widehat{U}$ is connected it follows that $\tilde{U}$ must contain a pre-image of the periodic point.

The other two kinds of Fatou components require more technical computations and we will only outline the proof. Without loss of generality one may assume that $V = f(V)$ is a fixed Fatou component. Using standard estimates on the rate of convergence for an attracting petal one can show that for $w \in \tilde{U} \subset \subset V$ the sum
$$
\sum_{n=0}^{\infty} a_n [f^n(w) - f^n(z)]
$$
converges uniformly. This follows from the fact that if $b_0, b_1, \ldots \in \CC$ is a summable sequence and $c_0, c_1, \ldots \in \RR^+$ is a decreasing sequence then $b_0c_0, b_1c_1, \ldots$ is summable.

In the case of an attracting basin the argument is as follows. Let us denote the attracting fixed point by $q \in V$ and write $\alpha = f^\prime(q)$. Let $G$ be the function defined by
$$
\lim_{n \rightarrow \infty} \alpha^{-n} [f^n(z) - q].
$$
Then using the same summability condition as for the attracting petal one can show that the sum
$$
\sum_{n=0}^{\infty} a_n [\frac{G(z)}{G(w)} \left(f^n(w) - q\right)  - \left(f^n(z) - q\right)],
$$
converges uniformly for $w \in \tilde{U} \subset \subset V$. When $q=0$ it follows immediately that the sums $\sum a_nf^n(w)$ converge uniformly. When $q\neq 0$ we can similarly show that the sequence $a_0, a_1, \ldots$ must be summable and we obtain the same result.
\end{proof}

We have the following corollary:

\begin{corollary}\label{solution}
Let $f$ be a polynomial of degree $d \ge 2$, and assume that for every $n \in \NN$ the polynomial $f^n$ has $n(d-1)$ critical points. Then $f$ has a time average near $z \in \CC$ if and only if $z$ is eventually mapped into a Siegel disc.
\end{corollary}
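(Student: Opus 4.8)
The plan is to assemble the machinery already in place, the only genuinely new ingredient being a combinatorial argument about the tree-automorphism group. The forward implication is free: if $z$ is eventually mapped into a Siegel disc then Theorem \ref{Siegel} produces a time average near $z$, with no hypothesis on critical values required. So the whole content is the reverse implication, which I would prove in contrapositive form, namely that if $z$ is \emph{not} eventually mapped into a Siegel disc then $f$ has no time average near $z$. First I localize $z$. If $z \in I(f)$ then Theorem \ref{escaping} already forbids a time average, and if $z \in J(f)$ then Corollary \ref{julia} does; hence I may assume $z$ lies in a bounded Fatou component $V$, which by Sullivan's theorem is eventually mapped onto a periodic component $V_0$. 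The hypothesis that $f^n$ has the maximal number $n(d-1)$ of critical values for every $n$ forces every critical orbit to be infinite, so no critical point is periodic and super-attracting basins cannot occur. Thus $V_0$ is an attracting basin, a parabolic (attracting petal) component, or a Siegel disc, and since the Siegel case is the desired conclusion I assume $V_0$ is of one of the first two types and seek a contradiction.

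In these two cases the orbit of $V_0$ contains a critical point $\omega$, which is exactly the feature separating them from a Siegel disc. Lemma \ref{enlarge} then upgrades the local uniform convergence of the maps $F_N$ to uniform convergence on \emph{every} relatively compact subset of the component $V$ containing $z$. The point is to use $\omega$ to feed Lemma \ref{monodromy} a starting set $S_0$ in general position in the pre-image tree. Concretely, for arbitrarily large $N$ I would take the base point $p$ near the critical value $f(\omega)$, so that two of the $d$ first pre-images in $f^{-1}(p)$ lie near $\omega$ as the two local sheets of $f$ over the critical value; pulling these two sheets back through the periodic cycle (choosing $N$ in a suitable residue class modulo the period of $V_0$) yields two points $w_1, w_2 \in f^{-N}(p)$ lying in a relatively compact $\tilde U \subset V$ and satisfying $f^{N-1}(w_1) \neq f^{N-1}(w_2)$, i.e.\ occupying two \emph{distinct top-level branches} of the tree. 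On $\tilde U$ the maps $F_N$ converge uniformly by Lemma \ref{enlarge}, so the hypotheses of Lemma \ref{monodromy} regarding $U$ are met.

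Now I invoke Theorem \ref{post-critical}: under the maximal-critical-value hypothesis the monodromy group of $f^N$ is the full group of tree-automorphisms. A purely combinatorial lemma then supplies the nested chain needed in Lemma \ref{monodromy}. Starting from $S_0 = \{w_1, w_2\}$ with $w_1, w_2$ in distinct top branches, the stabilizer of the leaf $w_1$ acts transitively on all leaves outside $w_1$'s top branch (their common ancestor with $w_1$ is the root), so successive overlapping unions, each meeting the previous set in the fixed leaf $w_1$, sweep in every leaf not lying over $w_1$; a second round, now fixing a leaf in another branch, sweeps in the remaining branch. Hence $S_m = S$, and Lemma \ref{monodromy} gives $a_0 = \cdots = a_{N-1} = 0$. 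As $N$ runs through arbitrarily large values all the coefficients vanish, contradicting the non-triviality of $\{a_n\}$ demanded by a time average, which proves Conjecture \ref{main} in this generic class.

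The main obstacle is the construction of the second paragraph: I must obtain two captured pre-images in genuinely distinct top-level branches. The naive move of clustering pre-images at $\omega$ fails, because the two local pre-images of a generic point near the critical value are \emph{siblings}, and any set contained in a proper subtree is an invariant block for the tree-automorphism group; the overlap condition then never lets $S_0$ escape that subtree, so such an $S_0$ can be enlarged to at most $d^{N-1}$ leaves and the degree argument in Lemma \ref{monodromy} never triggers. Forcing the two pre-images into distinct top branches, which is precisely what the presence of a critical point in the orbit (through the bookkeeping along the periodic cycle) makes possible, is therefore the heart of the matter. It also explains why the statement is sharp: for a Siegel disc the return map is injective and carries no critical point, so a second branch is simply unavailable and the conclusion correctly fails.
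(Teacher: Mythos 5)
Your proposal is correct and follows essentially the same route as the paper: Theorem \ref{Siegel} for the forward direction, Theorem \ref{escaping} and Corollary \ref{julia} to exclude the escaping and Julia sets, Lemma \ref{enlarge} to enlarge $U$, the critical point in the periodic cycle (hit at the last iterate) to place two pre-images in distinct main branches, Theorem \ref{post-critical} to get the full tree-automorphism group, and a nested chain feeding Lemma \ref{monodromy}. The only differences are cosmetic: you observe that the maximal-critical-value hypothesis rules out super-attracting basins (the paper instead handles that case via Lemma \ref{enlarge}), and your stabilizer-based construction of the chain $S_0 \subset \cdots \subset S_m$ is a more explicit version of the paper's one-line permutation argument.
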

\begin{proof}
By Theorem \ref{Siegel} we know that $f$ has a time average near $z$ if $z$ is eventually mapped into a Siegel disc. We have seen in Theorem \ref{escaping} and Corollary \ref{julia} that $f$ does not have a time average near $z$ if $z$ lies in the escaping set or in the Julia set. So let us assume for the purpose of contradiction that $f$ has a time average near a point $z$ that is eventually mapped into a periodic Fatou component $V$ that is either a super-attracting basin, an attracting basin or an attracting petal. Let $N $ be a large integer such that $f^N$ maps $V$ into a periodic Fatou component and $f$ is not injective on $F^{N-1}(V)$. Then choose $p \in f^N(V)$ close enough to the limit point, i.e. the periodic point either in $f^N(V)$ or on the boundary in case of an attracting petal.

It follows from Lemma \ref{enlarge} that we may assume that $U$ contains two pre-images $f^{-N}(p)_1$ and $f^{-N}(p)_2$ that lie in different main branches in the monodromy tree, in other words
$$
f^{N-1}(f^{-N}(p)_1) \neq f^{N-1}(f^{-N}(p)_2).
$$

We know from Theorem \ref{post-critical} that the monodromy group of $f^N$ is equal to the entire group of tree automorphisms. Therefore for any other pre-image $f^{-N}(p)_j$, with $j \neq 1, 2$ there is a monodromy permutation that maps $f^{-N}(p)_1$ to $f^{-N}(p)_j$ and maps $f^{-N}(p)_2$ to either $f^{-N}(p)_1$ or $f^{-N}(p)_2$. Therefore we obtain the sets $S_0, S_1, \ldots , S_m$ in Lemma \ref{monodromy} which gives that the sequence $a_0, a_1, \ldots$ for the time average is the zero-sequence which is a contradiction.
\end{proof}

\section{Polynomials with fewer critical values}

We will now look at some examples of polynomials whose iterates have fewer critical values and whose iterated monodromy groups are not equal to the tree automorphisms groups. However, for each of these examples the monodromy group will still be large enough to show that Conjecture \ref{main} holds.

First we look at the polynomial $x^2 - 1$. Both the dynamical behavior and the iterated monodromy groups of this polynomial are well-known, see for example \cite{Nekrabook}. Note that the monodromy groups of $f^n$ for large $n\in \NN$ are much smaller than the tree automorphism groups. The orbit of the unique critical point $0$ contains only the points $0$ and $1$, and therefore the iterated monodromy groups are all generated by only two elements. All bounded Fatou components are eventually mapped to the super-attracting basin of the cycle $\{0,1\}$. Let $V_0$ be the Fatou component that contains the point $-1$ and let $z \in \CC$ lie in a bounded Fatou component $V$. We assume for the purpose of contradiction that $f$ admits a time average near $z$, the argument that we will use to get a contradiction is similar to that in the proof of Theorem \ref{escaping}.

Let $m$ be the smallest positive integer such that $f^m(V) = V_0$ and let $V_1, V_2, \ldots , V_l$ be all Fatou components that are mapped onto $V_0$ by $f^m$. If $p \in V_0$ then all pre-images $f^{-m}(p)_j$ must lie in one of the sets $V_j$.

The Fatou component that contains $0$ is the only Fatou component that maps directly to $V_0$, and $f$ is $2-1$ there. Hence $V$ contains exactly two pre-images $f^{-m}(p)_j$. By choosing the point $p$ close enough to $-1$ and by using Lemma \ref{enlarge} to enlarge the set $U$ if necessary we may assume that $U$ contains exactly two pre-images.

We define a graph representing the Filled-in Julia set of a polynomial as follows. We assign a vertex to each bounded Fatou component and to each point in the Julia set. For each boundary point of a bounded Fatou component we draw an edge between the vertex representing the Fatou component and the vertex representing the boundary point. If the Filled-in Julia set is connected and has interior then we obtain a connected graph, and since the Julia set is the boundary of the escaping set we see that there is a unique simple path between any two bounded Fatou components.

Recall that the Fatou components $V_2, V_3, \ldots , V_l$ are exactly those that get mapped to $V_0$ by $f^m$, where $V_0$ is the Fatou component that contains the critical value of $f$ and $m$ is the minimal positive integer with $f^m(V) = V_0$. It follows from the fact that $f$ has only one critical point that the simple paths from the $V$-vertex to the vertices corresponding to the Fatou components $V_2, V_3, \ldots , V_l$ all start with the same edge.

We now define a closed loop starting at $p$ that moves through the Julia set into the escaping set, then turns once clockwise around the filled Julia set on a level curve of the Green's function, and then back to $p$ along the same path. We claim that this loop induces a monodromy element that is a complete cycle and maps one of the pre-images in $V$ to the other, which gives a contradiction just as in the proof of Theorem \ref{escaping}.

The fact that this loop induces a complete cycle follows immediately from the fact that the Julia set is connected. To see that one of the pre-images is mapped to the other we must argue that the paths of these two pre-images lie adjacent on the level-curve of the Green's function. This follows from the fact that the Fatou component is extremal in the set $V_1, \ldots v_l$ in the sense that all simple paths from $V$ to $v_j \neq V$ start with the same edge.

In fact, this argument holds for any polynomial of degree $2$.

\begin{theorem}\label{degreetwo}
Let $f$ be a polynomial of degree $2$. Then $f$ has a time average near $z$ if and only if $z$ is eventually mapped into a Siegel disc.
\end{theorem}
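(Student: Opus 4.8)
\section*{Proof proposal}

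The plan is to carry the argument just given for $x^2-1$ through for an arbitrary quadratic, upgrading each structural input. The implication ``eventually mapped into a Siegel disc $\Rightarrow$ time average'' is immediate from Theorem \ref{Siegel}, so only the converse needs work. Assuming $f$ has a time average near $z$, Theorem \ref{escaping} and Corollary \ref{julia} place $z$ in a bounded Fatou component $V$; in particular the filled Julia set $K(f)$ has interior, which for a quadratic forces the critical orbit to stay bounded and hence $J(f)$ to be connected --- the single global input that the loop argument requires. By Sullivan's theorem $V$ is eventually periodic, and being bounded its terminal component is an attracting basin, a super-attracting basin, an attracting petal, or a Siegel disc; I must exclude the first three.

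The unifying observation is that in each of the three cases the unique critical point $c_0$ lies in a periodic Fatou component $V_{\mathrm{crit}}$: for the super-attracting case $c_0$ is the periodic point itself, while for the attracting and parabolic cases $c_0$ need not be periodic, but it still lies in the periodic immediate basin, so its component is periodic. Writing $V_0$ for the component of the critical value $v_0=f(c_0)$, the map $f\colon V_{\mathrm{crit}}\to V_0$ is two-to-one, and since $v_0$ has $c_0$ as its only preimage, $V_{\mathrm{crit}}$ is the sole preimage component of $V_0$. Consequently every bounded Fatou component, in particular $V$, is eventually mapped onto $V_0$; choosing $m$ minimal with $f^m(V)=V_0$, the chain from $V$ to $V_0$ passes through $V_{\mathrm{crit}}$ precisely at step $m-1$ and is ramified only there, so a generic $p\in V_0$ has exactly two preimages under $f^m$ inside $V$, namely the two branches emanating from the single ramification.

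With these two preimages $\zeta_1,\zeta_2\in V$ in hand I would first invoke Lemma \ref{enlarge} to guarantee that the partial sums $F_N$ converge uniformly on a relatively compact $\tilde U\subset V$ containing both (in the super-attracting case using the weaker conclusion that $\tilde U$ reaches a preimage of the periodic point). Then, exactly as for $x^2-1$, I form the loop that exits $p$ through $J(f)$ into the escaping set, winds once clockwise around $K(f)$ along a level curve of $G$, and returns; connectivity of $J(f)$ makes the induced permutation $\sigma$ a complete cycle on the $2^m$ preimages. Transporting $\zeta_1,\zeta_2$ outward along the radial part lands them at adjacent points of the level curve --- this is where the single critical point enters, since all components mapping to $V_0$ funnel through $V_{\mathrm{crit}}$, making $V$ extremal in the graph of $K(f)$ so that its two preimages are consecutive in the cyclic order. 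Hence $\sigma(\zeta_1)=\zeta_2$, and feeding $S_0=\{\zeta_1,\zeta_2\}\subset\tilde U$, $S_{j+1}=S_j\cup\sigma(S_j)$ into Lemma \ref{monodromy} forces $a_0=\cdots=a_{m-1}=0$; since $m$ may be taken arbitrarily large, the whole sequence vanishes, the desired contradiction.

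The hard part will be the adjacency claim in step three for the genuinely attracting and parabolic regimes, where $c_0$ is not periodic: I must confirm that the graph of $K(f)$ really is organized so that $V$ attaches to the rest only through the edge toward $V_{\mathrm{crit}}$, so that $\zeta_1$ and $\zeta_2$ transport to consecutive positions on the level curve. The parabolic case carries the additional nuisance that the limit point lies on $J(f)=\partial V_0$ rather than inside $V_0$, so $p$ must be taken inside a petal and Lemma \ref{enlarge} applied through its summability estimate. Once these points are settled, the single-critical-point structure makes every branching in the preimage tree descend from the one critical value, and the remainder is bookkeeping.
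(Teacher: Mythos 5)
Your overall strategy is the one the paper follows: reduce to a bounded Fatou component via Theorem \ref{Siegel}, Theorem \ref{escaping} and Corollary \ref{julia}, use the uniqueness of the critical point to show that $V$ carries exactly two preimages of a generic $p$ in the component $V_0$ of the critical value, enlarge $U$ with Lemma \ref{enlarge}, and then run the complete-cycle loop around $K(f)$ from the $z^2-1$ discussion through Lemma \ref{monodromy}. The difficulty is the step you yourself flag as unresolved: the adjacency (extremality) claim in the genuinely attracting and parabolic regimes. That is a real gap, not a formality. The extremality of $V$ in the graph of $K(f)$ is exactly the delicate combinatorial input of the whole argument (the paper shows it already fails for degree-$4$ examples such as $2z^2(z-2)^2$), and in the attracting and parabolic cases the post-critical set is infinite, so you cannot simply read off the tree structure of the components mapping to $V_0$ as one does for $z^2-1$. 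As written, your proof is complete only in the super-attracting case.

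The paper closes precisely this hole with a case split you omit: if the critical orbit is not pre-periodic, then every iterate $f^n$ has the maximal number $n=n(d-1)$ of distinct critical values, so Corollary \ref{solution} applies and finishes the proof with no loop argument at all. This dichotomy is decisive because for a quadratic a genuinely attracting or parabolic cycle forces the critical orbit to be infinite (the critical point sits in the immediate basin and its orbit converges to, but never lands on, the cycle or the parabolic point), so the cases in which you cannot verify adjacency are exactly the cases swallowed by Corollary \ref{solution}. What remains after that reduction is a pre-periodic critical orbit, hence a super-attracting cycle, where your (and the paper's) $z^2-1$ argument applies verbatim. I recommend you restructure the proof around this dichotomy rather than trying to establish extremality in the attracting and parabolic settings.
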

\begin{proof}
There is exactly one critical point. If the orbit of the critical point is not (pre-) periodic then the conclusion follows from Corollary \ref{solution}. If $z$ lies in a Siegel disc or in the escaping set or in the Julia set then the result follows from Theorems \ref{Siegel} and \ref{escaping} and Corollary \ref{julia}, so we may assume that $z$ is eventually mapped into a periodic Fatou component that is either a (super-) attracting basin or an attracting petal. But the orbit of such a component must contain the critical point. We let $V_0$ be the Fatou component that contains the critical value of $f$ and we choose $p$ very close to the critical value. Then we can follow exactly the same argument as for $z^2 - 1$ to see that $f$ does not have a time average near $z$.
\end{proof}

Essential in the above proof is that there is only one critical point.

The extremality of $V$ in the graph discussed above does not necessarily hold for polynomials of higher degree. Consider for example for the polynomial $f(z) = 2 z^2 (z-2)^2$. The critical points of $f$ are $0$, $1$ and $2$, the point $1$ is mapped to $2$ and $2$ is mapped to the super-attracting fixed point $0$. There are three Fatou components that are mapped to the Fatou component containing $2$, namely those that contain the points $1-\sqrt{2}$, $1$ and $1+\sqrt{2}$. The simple path in the graph between the vertices representing the Fatou components containing $1-\sqrt{2}$ and $1+\sqrt{2}$ passes through the vertex representing the Fatou component containing $1$.

This means that the argument in the proof of Theorem \ref{degreetwo} does not work for $f(z) = 2z^2(z-2)^2$, but one can explicitly compute the iterated monodromy groups of $f$ to show that Conjecture \ref{main} does hold for this polynomial. There are still enough permutations in the iterated monodromy groups to construct the sets $S_0, S_1, \ldots S_m$ in Lemma \ref{monodromy}. However, there are polynomials for which even this construction is not possible.

Consider for example the polynomial $f(z) = z^4 - 2z^2$. Notice that $f$ is the second iterate of $z^2-1$, so the iterated monodromy groups are still generated by the loops around the critical values $-1$ and $0$. Let $V$ be the Fatou component containing $\sqrt(2)$, and take a generic point $p$ in the Fatou component containing $0$. Now let $N \ge 2$ and let $S$ be the set of the $4^N$ distinct pre-images of $p$. Let $S_0 \subset S$ be the subset of those pre-images that lie in $V$. Let $\widehat{S}$ be the largest set that one can obtain using the construction in Lemma \ref{monodromy}, in other words, $\widehat{S}$ is the smallest subset of $S$ that contains $S_0$ and such that $\sigma(\widehat{S}) \cap \widehat{S}$ is either $\emptyset$ or $\widehat{S}$ for any permutation $\sigma$ in the monodromy group of $f^N$. By computing the iterated monodromy groupa of $f$ explicitly one sees that $\widehat{S}$ is not equal to $S$ but contains exactly half the elements of $S$. In fact, $\widehat{S}$ contains two of the four major branches of the tree of pre-images. However, since $\frac{1}{2} 4^N > 4^{N-1}$ one can still follow exactly the same argument as in Lemma \ref{monodromy} to see that $f$ cannot have a time average near any point in $V$.

It seems that to solve Conjecture \ref{main} using iterated monodromy groups one would need to give a positive answer to the following question:

\begin{question}
Let $V$ be a Fatou component for a polynomial $f$ of degree $d$. For $N\in \NN$ and a generic point $p \in \CC$ such that $f^N$ has $d^N$ distinct pre-images we define $\widehat{S}$ as the smallest subset of $S = \{z \mid f^N(z) = p\}$ that contains $S_0 = \{ z\in V \mid f^N(z) = p\}$ and has the property that $\sigma(\widehat{S})$ is either $\emptyset$ or $\widehat{S}$ for any permutation $\sigma$ in the monodromy group of $f^N$. Can we always find $N$ and $p$ such that the number of elements in $\widehat{S}$ is strictly larger than $d^{N-1}$?
\end{question}

We know that the answer is affirmative for generic polynomials, polynomials of degree $2$ as well as for many other polynomials.

\section{Polynomial mappings in $\CC^k$}

Some of the proofs for results from the previous sections also work in higher dimensions.  For example, if $z$ lies in a Siegel domain of $f$ (a domain where the action of $f$ is holomorphically conjugate to a rotation) then $f$ has a local time average near $z$. Also, the proof of Theorem \ref{global} immediately gives the following result:

\begin{theorem}
Let $f : \CC^k \mapsto \CC^k$ be a polynomial mapping of algebraic degree $d \ge 2$ and assume that the topological degree equals $d^k$. Then $f$ does not admit a global time average.
\end{theorem}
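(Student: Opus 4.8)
The plan is to transplant the proof of Theorem~\ref{global} to $\CC^k$, the only genuinely new input being a higher-dimensional substitute for the one-variable fact that a polynomial of degree $m$ which is constant on $m+1$ points must be constant. Suppose $f$ had a global time average, so that $\sum_{n=0}^{\infty} a_n f^n \equiv c$ on $\CC^k$, and suppose for contradiction that $a_{N-1}\neq 0$ for some $N \in \NN$. Since the topological degree of $f$ equals $d^k$ and the topological degree is multiplicative under composition, $f^N$ has topological degree $d^{kN}$; hence a generic $p \in \CC^k$ has exactly $d^{kN}$ distinct preimages under $f^N$. Writing $S$ for this fiber and $F_{N-1} = \sum_{n=0}^{N-1} a_n f^n$, the identity
\[
F_{N-1} = c - \sum_{n=N}^{\infty} a_n f^n
\]
shows, exactly as in Theorem~\ref{global}, that $F_{N-1}$ is constant on $S$: for $w \in S$ one has $f^n(w) = f^{\,n-N}(p)$ for every $n \ge N$, so the right-hand side depends only on $p$.

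First I would upgrade ``$F_{N-1}$ is constant on the single fiber $S$'' to ``$F_{N-1}$ factors through $f^N$''. Grouping the tail as $\sum_{n \ge N} a_n f^n = H \circ f^N$ with $H(u) = \sum_{m \ge 0} a_{m+N}\, f^m(u)$, the global identity reads $F_{N-1} = (c - H)\circ f^N$ on all of $\CC^k$, so each scalar component of $F_{N-1}$ is pulled back along $f^N$. The maximal topological degree hypothesis is precisely what makes $f^N$ a \emph{finite}, surjective polynomial map: it forces every component of $f$ to have degree exactly $d$ and the leading homogeneous part $f_d$ to satisfy $f_d^{-1}(0) = \{0\}$, so that the coordinate ring of the source is module-finite over its image under $(f^N)^*$. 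Since the polynomial ring is integrally closed, the factoring map $\Phi := c - H$ is forced to be a genuine polynomial map $\CC^k \to \CC^k$, even though $H$ was defined only as a convergent series.

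It then remains to compare algebraic degrees. Because $f_d^{-1}(0) = \{0\}$, the homogeneous leading part of $f^N$ is $f_d^{\circ N}$, a finite and hence surjective homogeneous map of degree $d^N$; consequently no cancellation occurs in the leading term of a composition and $\deg(\Phi \circ f^N) = (\deg \Phi)\cdot d^N$. Since $\deg F_{N-1} \le d^{N-1} < d^N$, we must have $\deg \Phi = 0$, i.e.\ $F_{N-1}$ is constant. But the degree-$d^{N-1}$ homogeneous part of $F_{N-1}$ equals $a_{N-1}\, f_d^{\circ (N-1)}$, which is nonzero precisely because $a_{N-1}\neq 0$ and $f_d^{\circ(N-1)}\neq 0$; constancy of $F_{N-1}$ therefore forces $a_{N-1} = 0$, a contradiction. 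Running the argument for every $N$ shows that all $a_n$ vanish, so no nonzero sequence can yield a global time average.

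The main obstacle, and the reason the result is not quite ``immediate'', is exactly this replacement for the point-counting step. In one variable, $F_{N-1}$ constant on a fiber $S$ with $|S| > \deg F_{N-1}$ instantly yields constancy. In $\CC^k$ the naive B\'ezout count fails whenever $F_{N-1}$ is non-dominant, since then every fiber of $F_{N-1}$ is positive-dimensional and can contain all of $S$ without any contradiction. The factoring-through-$f^N$ argument sidesteps this by working one coordinate at a time and comparing degrees rather than cardinalities; the two non-formal ingredients it needs---finiteness of $f^N$ (so that $\Phi$ is a polynomial) and nonvanishing of the leading part of the composition (so that degrees add)---are both consequences of the maximal topological degree assumption $f_d^{-1}(0)=\{0\}$.
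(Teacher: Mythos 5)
Your proof is correct, but it is worth saying that it does more work than the paper does: the paper offers no separate argument for this theorem, asserting only that the proof of Theorem \ref{global} ``immediately gives'' it, and you have correctly put your finger on the one step of that proof that does \emph{not} transfer verbatim to $\CC^k$ --- a polynomial map of degree $d^{N-1}$ can perfectly well be constant on $d^{kN}$ points without being constant, so the cardinality-versus-degree count needs a replacement. Your replacement (factor $F_{N-1}$ through $f^N$ using finiteness of $f^N$ and integral closedness of the polynomial ring, then compare algebraic degrees using the non-cancellation of leading terms guaranteed by $f_d^{-1}(0)=\{0\}$) is sound; the only compressed step is the claim that $\Phi=c-H$, a priori just a locally uniformly convergent series, is a polynomial. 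This does follow as you indicate --- each component of $F_{N-1}$ is integral over $(f^N)^*\CC[u]$ and constant on generic fibers, hence lies in $\mathrm{Frac}((f^N)^*\CC[u])\cap\CC[z]=(f^N)^*\CC[u]$ by integral closedness --- but you could shortcut both this step and the degree comparison with a growth estimate: since $f_d^{-1}(0)=\{0\}$ one has $\|f^N(z)\|\gtrsim\|z\|^{d^N}$ for large $\|z\|$, so $|\Phi(u)|=|F_{N-1}(w)|\lesssim\|u\|^{d^{N-1}/d^N}$ on preimages, and Liouville forces $\Phi$ to be constant at once. An alternative completion closer in spirit to the paper's point count is to observe that for generic $p$ the fiber $f^{-N}(p)$ is a reduced complete intersection of $k$ hypersurfaces of degree $d^N$ with no points at infinity (again by the maximal topological degree hypothesis), so its homogeneous ideal is generated in degree $d^N$ and no hypersurface of degree $\le d^{N-1}$ can contain it; this turns ``constant on $S$'' directly into ``constant''. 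Either way, your writeup supplies a genuine missing argument rather than a gap of your own, and the final contradiction via the top homogeneous part $a_{N-1}f_d^{\circ(N-1)}$ is exactly right.
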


Of course, in higher dimensions there are many polynomial mappings for which the topological degree is strictly less than $d^k$, and such polynomial mappings may well have a global time average. As we noted in the introduction, an elementary mapping is locally finite so certainly has a global time average. In fact, for invertible polynomial mappings in $\CC^2$ we see that locally finite mappings are the only maps with global time averages:

\begin{theorem}\label{henon}
A polynomial automorphism $f: \CC^2 \rightarrow \CC^2$ admits a global time average if and only if $f$ is polynomially conjugate to an elementary or affine mapping. In particular if and only if $f$ is locally finite.
\end{theorem}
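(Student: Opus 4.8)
The plan is to reduce the statement to the Friedland--Milnor classification \cite{fm} (which rests on Jung's theorem \cite{jung}): every polynomial automorphism of $\CC^2$ is polynomially conjugate either to an affine or elementary map, or to a composition $g = g_1 \circ \cdots \circ g_r$ of generalized H\'{e}non maps, and in the latter case $\deg(g^n) = d^n$ with $d = \prod_i \deg g_i \ge 2$. In particular a H\'{e}non composition is not locally finite while elementary and affine maps are, so the equivalence with local finiteness will follow once we prove three things: (i) elementary and affine maps are locally finite; (ii) a locally finite automorphism has a global time average; and (iii) a composition of H\'{e}non maps has no global time average. Together with the observation that having a global time average is a conjugacy invariant --- if $g = \phi \circ f \circ \phi^{-1}$ and $\sum a_n g^n \to c$ uniformly on compacta, then $\sum a_n f^n = \phi^{-1} \circ (\sum a_n g^n) \circ \phi \to \phi^{-1}(c)$, again a constant, the convergence transporting because $\phi,\phi^{-1}$ are proper --- these yield the theorem.

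For (i), I would compute directly from \eqref{elementary}: an elementary map fixes all but one coordinate, so its $n$-th iterate is $(z,w) \mapsto (\alpha^n z + (\sum_{i=0}^{n-1}\alpha^i)\, p(w),\, w)$ and has degree $\max(1, \deg p)$ independent of $n$; affine maps have degree one. Hence $\sup_n \deg(f^n) < \infty$, and this bound is preserved under conjugation because $\deg(\phi^{-1} \circ f^n \circ \phi) \le \deg(\phi^{-1})\deg(\phi)\deg(f^n)$. For (ii), local finiteness means all $f^n$ lie in the finite-dimensional space of polynomial maps of degree at most $\sup_n \deg(f^n)$; therefore they satisfy a nontrivial finite linear relation $\sum_n a_n f^n = 0$, which is exactly \eqref{polyroots}. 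This finite sum converges trivially on every compact set and equals the constant $0$, so it is a global time average.

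The heart of the matter is (iii), and here the difficulty is genuinely different from everywhere else in the paper. In Theorems \ref{global} and \ref{escaping} the contradiction came from the fact that a polynomial of degree $d\ge 2$ is $d$-to-one: the tail $\sum_{n \ge N} a_n f^n$ factors as $T \circ f^N$ and is therefore automatically constant on the $d^N$-point fibers of $f^N$, which forces the low-degree head $F_{N-1}$ to be constant. An automorphism is injective (as a rational self-map of $\PP^2$ it is birational, of topological degree one), so these fibers are single points and the entire preimage/monodromy mechanism collapses. The plan is therefore to replace counting by a quantitative estimate. Using the pluricomplex Green's function $G^+$ of $g$, which satisfies $G^+ \circ g = d\, G^+$ and $G^+ = \log^+\|\cdot\| + O(1)$, uniform convergence of $\sum a_n g^n$ on the escaping set $U^+ = \{G^+ > 0\}$ forces $\tfrac{1}{d^n}\log|a_n| \to -\infty$, since $G^+$ is unbounded on $U^+$. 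I would then try to upgrade this super-fast decay to the vanishing of every $a_n$ by exploiting that the limit is not merely bounded but exactly constant: restricting to the forward-invariant vertical region where $g$ behaves near the line at infinity like a one-variable map of degree $d$ collapsing $\ell_\infty$ to a fixed point, one recovers a genuine multiplicity and can run an analogue of Lemma \ref{monodromy} along connected level sets of $G^+$.

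The main obstacle is exactly this last step: the non-injectivity that the counting argument needs is present only at infinity (the extended map $\bar g$ contracts $\ell_\infty$ minus its indeterminacy point to a single point), whereas the tail $T = \sum_m a_{m+N} g^m$ is controlled only on the finite part of $\CC^2$, where $g$ remains injective. Reconciling these --- transporting the exactness of the relation $F_{N-1} = c - T\circ g^N$ out to the level sets of $G^+$ near infinity while keeping track of the orbit of the indeterminacy point (the algebraic-stability issue for $\bar g$ on $\PP^2$) --- is where the real work lies, and is the step I expect to be hardest.
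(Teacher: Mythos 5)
Your reduction via Friedland--Milnor, the treatment of the elementary/affine/locally finite direction, the conjugacy-invariance remark, and the first half of your H\'enon argument (using the escaping set to force rapid decay of the $a_n$) all match the paper. The genuine gap is the second half of step (iii), and you have located it yourself: after obtaining $\frac{1}{d^n}\log|a_n|\to-\infty$, you propose to ``recover a multiplicity'' near the line at infinity and rerun an analogue of Lemma \ref{monodromy} there. This is left unfinished in your proposal, and it is the wrong tool: the tail $\sum_{n\ge N}a_nf^n$ is controlled only on $\CC^2$, where $f$ is injective, so there are no nontrivial fibers to exploit, and nothing in the paper's monodromy machinery transfers to the boundary behavior of $\bar f$ on $\PP^2$.

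The paper closes the argument with a second, much more elementary dynamical input: besides the point $z$ whose forward orbit escapes (giving $|a_n|<C/2^n$, hence a summable, uniformly controlled tail), a H\'enon composition also has a point $w$ whose \emph{backward} orbit escapes exponentially fast while its forward orbit stays bounded --- any point with bounded forward orbit but unbounded backward orbit, which exists because the set of bounded forward orbits is unbounded while the set of points bounded in both time directions is compact. Evaluating $\sum_n a_nf^n=c$ at $f^{-m}(w)$ gives $\sum_n a_nf^{n-m}(w)=c$. The terms with $n\ge m$ stay uniformly bounded (bounded forward orbit of $w$, summable coefficients), while among the terms with $n<m$ the $n=0$ term dominates, since $\log|f^{-k}(w)|$ grows like a constant times $d^{k}$ and $d^{m}-d^{m-1}\to\infty$. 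Letting $m\to\infty$ forces $a_0=0$, then inductively $a_1=0$, and so on. No monodromy, no multiplicity, and no analysis at the line at infinity is needed: the injectivity that defeats the fiber-counting argument is circumvented by trading fibers for a backward orbit. You should replace your proposed final step with this two-orbit argument.
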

It was already shown by Furter and Maubach \cite{fuma} that a locally finite polynomial automorphism of $\CC^2$ is conjugate to an elementary or affine mapping.
\begin{proof}
Recall that a polynomial automorphism of $\CC^2$ is polynomially conjugate to either a single elementary map or affine map or to a finite composition of H{\'e}non mappings. If $f$ is conjugate to an affine or elementary mapping then the degrees of the maps $f^n$ are bounded and $f$ is locally finite, hence admits a global time average.

If $f$ is conjugate to a composition of H{\'e}non mappings then there must be a point $z$ whose forward orbit escapes to the line at infinity exponentially fast and also a point $w$ whose backward orbit escapes to infinity exponentially fast but whose forward orbit is bounded. If the sum $\sum a_n f^n$ converges for every element in $\CC^2$ then it must in particular converge for every point $f^m(z)$. It follows that there is some constant $C>0$ such that $|a_n| < \frac{C}{2^n}$. But by evaluating $\sum a_n f^n = c$ at points $f^{-m}(w)$ we see that $a_0 = 0$, and then also $a_1=0$ and so on. Hence $f$ does not admit a time average.
\end{proof}

Note that the existence of two orbits, one going exponentially fast to infinity in forward time, the other going to infinity exponentially fast in backward time yet is bounded in forward time, guarantees that there is no non-zero sum of the form $\sum a_n f^n$ that converges globally. This idea holds in dimensions $3$ and higher as well and would allow one to prove that polynomial automorphisms in dimensions $3$ and higher whose behavior is similar to that of H{\'e}non mappings do not have global time averages.

Theorem \ref{henon} raises the question whether there exist polynomial mappings (or even automorphisms) that admit a global time average but are not locally finite. The author does not know the answer.

\bibliography{biblio}

\begin{thebibliography}{10}

\bibitem{beardonbook}
Alan~F. Beardon.
\newblock {\em Iteration of rational functions}, volume 132 of {\em Graduate
  Texts in Mathematics}.
\newblock Springer-Verlag, New York, 1991.
\newblock Complex analytic dynamical systems.

\bibitem{besm1}
Eric Bedford and John Smillie.
\newblock Polynomial diffeomorphisms of {${\bf C}\sp 2$}: currents, equilibrium
  measure and hyperbolicity.
\newblock {\em Invent. Math.}, 103(1):69--99, 1991.

\bibitem{besm2}
Eric Bedford and John Smillie.
\newblock Polynomial diffeomorphisms of {${\bf C}\sp 2$}. {II}. {S}table
  manifolds and recurrence.
\newblock {\em J. Amer. Math. Soc.}, 4(4):657--679, 1991.

\bibitem{besm3}
Eric Bedford and John Smillie.
\newblock Polynomial diffeomorphisms of {$\bold C\sp 2$}. {III}. {E}rgodicity,
  exponents and entropy of the equilibrium measure.
\newblock {\em Math. Ann.}, 294(3):395--420, 1992.

\bibitem{fosi1}
John~Erik Forn{\ae}ss and Nessim Sibony.
\newblock Complex {H}\'enon mappings in {${\bf C}\sp 2$} and
  {F}atou-{B}ieberbach domains.
\newblock {\em Duke Math. J.}, 65(2):345--380, 1992.

\bibitem{fosi2}
John~Erik Forn{\ae}ss and Nessim Sibony.
\newblock Complex dynamics in higher dimension. {I}.
\newblock {\em Ast\'erisque}, (222):5, 201--231, 1994.
\newblock Complex analytic methods in dynamical systems (Rio de Janeiro, 1992).

\bibitem{fm}
Shmuel Friedland and John Milnor.
\newblock Dynamical properties of plane polynomial automorphisms.
\newblock {\em Ergodic Theory Dynam. Systems}, 9(1):67--99, 1989.

\bibitem{fuma}
Jean-Philippe Furter and Stefan Maubach.
\newblock Locally finite polynomial endomorphisms.
\newblock {\em J. Pure Appl. Algebra}, 211(2):445--458, 2007.

\bibitem{huob1}
John~H. Hubbard and Ralph~W. Oberste-Vorth.
\newblock H\'enon mappings in the complex domain. {I}. {T}he global topology of
  dynamical space.
\newblock {\em Inst. Hautes \'Etudes Sci. Publ. Math.}, (79):5--46, 1994.

\bibitem{huob2}
John~H. Hubbard and Ralph~W. Oberste-Vorth.
\newblock H\'enon mappings in the complex domain. {II}. {P}rojective and
  inductive limits of polynomials.
\newblock In {\em Real and complex dynamical systems (Hiller\o d, 1993)},
  volume 464 of {\em NATO Adv. Sci. Inst. Ser. C Math. Phys. Sci.}, pages
  89--132. Kluwer Acad. Publ., Dordrecht, 1995.

\bibitem{jung}
Heinrich W.~E. Jung.
\newblock \"{U}ber ganze birationale {T}ransformationen der {E}bene.
\newblock {\em J. Reine Angew. Math.}, 184:161--174, 1942.

\bibitem{mape}
Stefan Maubach and Han Peters.
\newblock Maps that are roots of power series.
\newblock {\em To appear in Math. Z.}, 2007.

\bibitem{milnorbook}
John Milnor.
\newblock {\em Dynamics in one complex variable}.
\newblock Friedr. Vieweg \& Sohn, Braunschweig, 1999.
\newblock Introductory lectures.

\bibitem{Nekrabook}
Volodymyr Nekrashevych.
\newblock {\em Self-similar groups}, volume 117 of {\em Mathematical Surveys
  and Monographs}.
\newblock American Mathematical Society, Providence, RI, 2005.

\bibitem{shum}
Ivan~P. Shestakov and Ualbai~U. Umirbaev.
\newblock The tame and the wild automorphisms of polynomial rings in three
  variables.
\newblock {\em J. Amer. Math. Soc.}, 17(1):197--227 (electronic), 2004.

\bibitem{sullivan}
Dennis Sullivan.
\newblock Quasiconformal homeomorphisms and dynamics. {I}. {S}olution of the
  {F}atou-{J}ulia problem on wandering domains.
\newblock {\em Ann. of Math. (2)}, 122(3):401--418, 1985.

\end{thebibliography}

\center{Han Peters\\
\small University of Wisconsin\\
\small 480 Lincoln Drive, United States\\
\small peters@math.wisc.edu}
\end{document}